\newcommand{\assign}{:=}
\newcommand{\longhookrightarrow}{{\lhook\joinrel\relbar\joinrel\rightarrow}}
\newcommand{\nin}{\not\in}
\newcommand{\nobracket}{}
\newcommand{\nocomma}{}
\newcommand{\tmem}[1]{{\em #1\/}}
\newcommand{\tmop}[1]{\ensuremath{\operatorname{#1}}}
\newcommand{\tmstrong}[1]{\textbf{#1}}
\newenvironment{proof}{\noindent\textbf{Proof\ }}{\hspace*{\fill}$\Box$\medskip}
\newtheorem{corollary}{Corollary}
\newtheorem{lemma}{Lemma}
\newtheorem{proposition}{Proposition}
{\theorembodyfont{\rmfamily}\newtheorem{remark}{Remark}}
\newtheorem{theorem}{Theorem}
\begin{document}

\title{Topological Symmetries of $\mathbbm{R}^3$}

\author{Slawomir Kwasik\footnote{The author acknowledges the support of the Simons Foundation Grant No.281810} \  and Fang Sun}

\maketitle
\begin{large}

\section{Introduction}

Let $G$ be a finite group acting effectively on a smooth manifold
$\mathcal{M}^n$ . In the theory of transformation groups one usually considers
two types of actions: topological actions and smooth actions. It was realized
very early (cf. [2],[3]) that there are topological actions which can not be
smoothed out, i.e. actions which are not conjugate (topologically) to smooth
actions.

In the case of an Euclidean space $\mathbbm{R}^n$ there is a particularly nice
class of smooth actions, namely linear actions of $G$ on $\mathbbm{R}^n$.
Again there is plenty of examples (cf. [14],[15],[26]) of smooth actions which can
not be linearized (even topologically).

It turns out however that in dimension $n \leqslant 2$ every topological
finite group action on $\mathbbm{R}^n$ is conjugate to a linear action (cf.
[4],[19],[11]). For the Euclidean space $\mathbbm{R}^3$ the situation is
drastically different. First, there exist topological nonsmoothable actions of
cyclic groups on $\mathbbm{R}^3$. Second, the existence of uncountably many
nonequivalent such actions (cf. [2],[3]) dashes any hope for classification of
topological actions of finite groups on $\mathbbm{R}^3$. On the other hand
smooth actions on $\mathbbm{R}^n$, $n = 3$, are conjugate to linear actions (a
highly nontrivial result, cf. [20]), and the linearization fails for smooth
actions on $\mathbbm{R}^n, n \geqslant 4$ (cf. [14] [15],[26]). In particular it
follows that every finite group acting orientation preservingly and smoothly
on $\mathbbm{R}^n, n \leqslant 3$ is isomorphic to a subgroup of $SO
(3)$.

In fact even though there exist nonlinear orientation preserving smooth actions of finite groups on
$\mathbbm{R}^4$, such group must be isomorphic to a subgroup of $SO
(4)$ (cf. [16] and Remark 3.1 in this paper).

In a sharp contrast with smooth actions there are topological orientation
preserving actions of finite groups $G$ on $\mathbbm{R}^4$ with $G \nsubseteq{SO}
(4)$ (cf. [21] and Remark 3.2 in this paper).

Consequently the main problem left open is the question concerning the
structure of finite groups acting topologically on $\mathbbm{R}^3$. More
specifically:\\

{{\tmstrong{Problem}}: {\tmem{Let $G$ be a finite group acting
topologically and orientation preservingly on $\mathbbm{R}^3$. Is $G$
isomorphic with a finite subgroup of $SO (3)$?}}}\\

This paper provides the positive answer to this problem. Namely:

\begin{theorem}
  Let $G$ be a finite group acting topologically and orientation preservingly
  on $\mathbbm{R}^3$. Then $G$ is isomorphic with a finite subgroup of
  $SO (3)$.
\end{theorem}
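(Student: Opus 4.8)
The plan is to keep the action purely topological and to substitute for the missing smooth structure a combination of Smith theory, Alexander duality and Euler-characteristic obstructions, organising the conclusion group-theoretically. First I would compactify: a homeomorphism of $\mathbbm{R}^3$ is proper, so the $G$-action extends to an orientation-preserving action on $S^3=\mathbbm{R}^3\cup\{\infty\}$ fixing $\infty$, and I will pass freely between $\mathbbm{R}^3$ and $S^3$, using $\mathbbm{Z}_p$-acyclicity of $\mathbbm{R}^3$ for Smith theory and compactness in $S^3$ for Alexander duality. Since $\mathbbm{R}^3$ is $\mathbbm{Z}_p$-acyclic, the fixed set $\mathrm{Fix}(P)$ of any $p$-subgroup $P$ is nonempty and $\mathbbm{Z}_p$-acyclic, and for an element of prime order acting orientation-preservingly the fixed set has even codimension, hence, being a proper nonempty cohomology manifold in a $3$-manifold, is a $\mathbbm{Z}_p$-cohomology line.

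Next I would read off the local structure from Borel's formula. For $(\mathbbm{Z}_p)^2$ acting on $\mathbbm{R}^3$, with each of the $p+1$ subgroups of order $p$ fixing a cohomology line, it reads $3-d=(p+1)(1-d)$ with $d=\dim\mathrm{Fix}((\mathbbm{Z}_p)^2)$, which has no solution with $0\le d\le 1$ when $p$ is odd; for $(\mathbbm{Z}_2)^3$, where each of the seven index-$2$ subgroups fixes a point, it reads $3-d=7(0-d)$, giving $d=-1/2$, which is impossible. Thus every odd Sylow subgroup is cyclic and the Sylow $2$-subgroup has $2$-rank at most $2$. Two homological devices then do the eliminating. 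First, if an element $g$ of order $>2$ had an isolated fixed point it would act freely on $\mathbbm{R}^3\setminus\{pt\}\simeq S^2$, forcing $|\langle g\rangle|\mid\chi(S^2)=2$, a contradiction. Second, if a subgroup $H$ fixes exactly a cohomology line $L$, then $H$ acts freely on $\mathbbm{R}^3\setminus L$, which by Alexander duality in $S^3$ is a $\mathbbm{Z}_p$-homology circle, so $H$ has $p$-periodic cohomology of period dividing $2$ and is therefore cyclic. Applied to a generalized quaternion $2$-subgroup, whose every nontrivial element contains the unique central involution $\tau$ in its cyclic span so that $\mathrm{Fix}(g)\subseteq\mathrm{Fix}(\tau)$, the second device forces the group to act freely on the complement of the line $\mathrm{Fix}(\tau)$ and hence to be cyclic, a contradiction; together with the Euler-characteristic device this removes the remaining rank-$\le2$ two-groups, leaving the Sylow $2$-subgroup cyclic or dihedral.

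The organising principle for the global classification is that for an element $c$ of prime order the normalizer $N_G(\langle c\rangle)$ preserves $L_c=\mathrm{Fix}(c)$ and acts on it through a finite subgroup of $\mathrm{Homeo}(\mathbbm{R})$, hence of order at most $2$, while the pointwise stabilizer of $L_c$ fixes a line and so is cyclic by the homology-circle device. Thus $N_G(\langle c\rangle)$ has exactly the ``axis-and-flip'' shape of the normalizer of a rotation subgroup in $SO(3)$, and this excludes the near-misses with the correct Sylow data but no three-dimensional realization: an element of order $7$ in $PSL(2,q)$ with $q\ge 7$, or in $A_7$, has normalizer $\mathbbm{Z}_7\rtimes\mathbbm{Z}_3$ of odd order, which would then have to be cyclic, while the dicyclic families fall to the homology-circle and Euler-characteristic devices applied to their order-$4$ elements. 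Feeding the resulting constraints (cyclic odd Sylow subgroups and a cyclic or dihedral Sylow $2$-subgroup) into the Gorenstein--Walter classification of groups with dihedral Sylow $2$-subgroup, and treating the cyclic-Sylow case directly, leaves precisely $C_n$, $D_n$, $A_4$, $S_4$, $A_5$, i.e. the finite subgroups of $SO(3)$.

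The genuine difficulty here is topological rather than group-theoretic. Because the action need not be smoothable, the fixed sets produced by Smith theory are only $\mathbbm{Z}_p$-cohomology manifolds and may be wildly embedded, so there is no tangent representation to differentiate and every step must run through the (co)homology of fixed sets and their complements. The delicate points are exactly those used above: that a prime-order element fixes a genuine cohomology line, that two nested cohomology lines coincide, that the induced action on such a line factors through $\mathrm{Homeo}(\mathbbm{R})$ with image of order $\le 2$, and that commuting elements cannot simultaneously rotate and reflect a common fixed axis—each trivial in the linear model but here recoverable only from Alexander duality and Euler-characteristic bookkeeping. Keeping every fixed set in dimension $\le 1$ is what makes these homological arguments available, and the only place a $2$-dimensional invariant set can intrude is handled by the known linearization of topological actions in dimension $2$ (cf. [4],[19],[11]). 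The hard part will be making this web of low-dimensional homological facts interlock tightly enough to force membership in the list above, and that is where I expect the main obstacle.
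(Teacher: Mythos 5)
Your homological toolkit is essentially the same engine the paper runs on: the fixed circle of a nontrivial cyclic group (the paper's Proposition 1, via Smith [25]), cyclicity of any group acting freely on the homology circle $S^3 - S^1$ (its Corollary 1), and the coincidence of nested fixed circles (its Lemma 1). Where you genuinely diverge is in the imported structure theory: you use Borel's formula to bound ranks where the paper invokes Dotzel--Hamrick to show that 2-groups acting on $\mathbbm{R}^3$ are cyclic or dihedral, and Gorenstein--Walter where the paper uses Suzuki's theorem on groups with cyclic Sylow subgroups for odd primes. Those substitutions are defensible in principle.

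The gap is your final classification step: ``feeding the resulting constraints (cyclic odd Sylow subgroups and a cyclic or dihedral Sylow 2-subgroup) into Gorenstein--Walter \ldots leaves precisely $C_n$, $D_n$, $A_4$, $S_4$, $A_5$.'' It does not. Those Sylow conditions are satisfied by, for example, $\mathbbm{Z}_7 \rtimes \mathbbm{Z}_3$, $\mathbbm{Z}_5 \rtimes \mathbbm{Z}_4$, and $(\mathbbm{Z}_p \oplus \mathbbm{Z}_q) \rtimes \mathbbm{Z}_2$ with the involution inverting only one factor --- none of which embeds in $SO(3)$. Eliminating exactly these near-misses is where the paper spends almost all of its effort: the list of obstruction kernels (several of which, e.g.\ Types 0, 2 and 5, are invisible to Sylow structure) and an induction over a composition series, treating every extension $0 \to N \to G \to \mathbbm{Z}_p \to 0$ with $N$ cyclic, dihedral, $A_4$ or $S_4$, including the non-split extensions classified by $H^2(\mathbbm{Z}_p, N)$. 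Your ``axis-and-flip'' normalizer principle is the right local tool (it is the paper's Lemma 1 in disguise) and, when pushed, does kill the examples above; but you have not pushed it, and your one concrete instance --- order-7 elements in $PSL(2,q)$ for all $q \ge 7$ --- is wrong as stated, since such elements need not exist; the correct target is the Borel subgroup $\mathbbm{Z}_q \rtimes \mathbbm{Z}_{(q-1)/2}$, which is what the paper uses. Similarly, ``the Euler-characteristic device removes the remaining rank-$\le 2$ two-groups'' elides a real case analysis (e.g.\ $\mathbbm{Z}_{2^m} \times \mathbbm{Z}_2$, semidihedral and modular 2-groups), which the paper sidesteps entirely by citing Dotzel--Hamrick. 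As it stands, the proposal correctly identifies the local obstructions but asserts, rather than proves, the global classification that constitutes the actual content of the theorem.
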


It turns out that Theorem 1 extends to actions which are not necessarily orientation preserving, namely: 

\textbf{Theorem}:
  Let $G$ be a finite group acting topologically
  on $\mathbbm{R}^3$. Then $G$ is isomorphic with a finite subgroup of
  $O (3)$.

The proof of the above theorem uses heavily Theorem 1 yet it is not exactly a direct consequence of Theorem 1. Consequently, it is treated separately in [27].\\

There are two main difficulties in dealing with Theorem 1.

The first difficulty is the lack of a systematic study of topological actions
of finite groups on 3-manifolds due to the existence of ``very exotic''
actions on $\mathbbm{R}^3 (S^3)$. The most striking results in this area being
constructions of R.H.Bing ([2], [3]) of cyclic group actions on $\mathbbm{R}^3
(S^3)$ with wildly embedded (and knotted) fixed point sets.

Note that such phenomena can not happen in the smooth setting (i.e., the Smith
Conjecture).

These constructions indicate serious problems (impossibility?) with a
development of equivariant versions of some powerful tools in 3-dimensional
topology which could help to analyze the above problem. Note that in the
smooth (locally linear) setting such tools were successfully developed (cf.
[23], [10], [18]).

The second difficulty in dealing with the above problem is the low dimension
of $\mathbbm{R}^n$ i.e. $n = 3$. This low dimension prevents the use of higher
dimensional techniques (for example the surgery theory) which could be helpful
in purely topological setting (cf. [21], [22] for the case of topological
actions in dimension four).

As a consequence our proof of Theorem 1 relies heavily on a careful group
theoretic analysis of the structure of finite groups which potentially can act
on $\mathbbm{R}^3$. The techniques employed to accomplish this are those of
homological algebra and finite group theory.

The considerations leading to the proof of Theorem 1 turn out to be
surprisingly intricate and delicate.

To a large extend this is however expected. For, dealing with purely
topological actions, even on surfaces, without any assumption about
geometrization of these actions, usually requires a fair amount of work (cf.
[11], [24]).

\section{Proofs}

This section contains the proof of Theorem 1. Before providing the details we
outline the general strategy of the proof.

Throughout this section, $G$ is assumed to be a finite group acting
topologically and orientation preservingly on $\mathbbm{R}^3$. All group
actions in this paper are assumed to be effective.

\subsection{Outline of Proof}

Our considerations are divided into two cases.

{\underline{Case 1}}: $G$ is solvable.

{\underline{Case 2}}: $G$ is not solvable.

Suppose now that $G$ is solvable. We will investigate the structure of $G$ via
a subnormal series whose factors are cyclic groups of prime order (the
existence of such is guaranteed by solvability). In other words, we will treat
$G$ as obtained by a finite sequence of group extensions by $\mathbbm{Z}_p$,
$p$ prime. All possible sequences can be expressed in a ``tree diagram'' with
the trivial group at the origin, possible $G$ at the other end and each line
segment representing an extension by $\mathbbm{Z}_p$. The existence of an
action of $G$ on $\mathbbm{R}^3$ significantly reduces the possibilities at
each step. We begin with extensions of cyclic groups, and we have:

\textbf{A.1.} $\{ \nobracket 0 \longrightarrow \mathbbm{Z}_n \longrightarrow G
\longrightarrow \mathbbm{Z}_p \longrightarrow 0$, $n \in \mathbbm{Z}^+$, $p$
is odd\}$\Longrightarrow G$ is cyclic.

\textbf{A.2.} $\{ \nobracket 0 \longrightarrow \mathbbm{Z}_n \longrightarrow G
\longrightarrow \mathbbm{Z}_2 \longrightarrow 0$, $n \in \mathbbm{Z}^+ \}
\nobracket \Longrightarrow G$ is cyclic or dihedral.

Here $\mathbbm{Z}^+ = \{ 1, 2, 3, \ldots \}$ and $\mathbbm{Z}_1 = 0$.

Thus we obtain part of the tree diagram

\begin{eqnarray*}
  &  & \{ 0 \} \overset{\tmop{odd}}{\longrightarrow} \tmop{Cyclic}
  \overset{\mathbbm{Z}_2}{\longrightarrow}  \left\{ \begin{array}{l}
    \tmop{Cyclic} \overset{\tmop{odd}}{\longrightarrow} \tmop{Cyclic}
    \overset{\mathbbm{Z}_2}{\longrightarrow} \left\{ \begin{array}{l}
      \tmop{Cyclic} \overset{\tmop{odd}}{\longrightarrow} \ldots\\
      \\
      \tmop{Dihedral}
    \end{array} \right.\\
    \\
    \tmop{Dihedral}
  \end{array} \right.
\end{eqnarray*}
Where the ``odd'' step is a consecutive extension by $\mathbbm{Z}_p$, $p$ odd.

\

Next we look at branches (extensions) stemming from dihedral groups, here we
have:

\textbf{B.1.} $\{ \nobracket 0 \longrightarrow D_{2 n} \longrightarrow G
\longrightarrow \mathbbm{Z}_p \longrightarrow 0$, $n \geqslant 2$, $p$ is
odd\} $\Longrightarrow$ $G$ is dihedral or the alternating group $A_4$

\textbf{B.2.} $\{ \nobracket 0 \longrightarrow D_{2 n} \longrightarrow G \longrightarrow
\mathbbm{Z}_2 \longrightarrow 0$, $n \geqslant 2$\} $\Longrightarrow G$ is
dihedral

Consequently such branch assumes the following form:
\begin{eqnarray*}
  & \longrightarrow & \tmop{Dihedral} \overset{\tmop{odd}}{\longrightarrow} 
  \left\{ \begin{array}{l}
    \tmop{Dihedral} \overset{\mathbbm{Z}_2}{\longrightarrow} \tmop{Dihedral}
    \longrightarrow \ldots\\
    \\
    A_4
  \end{array} \right.
\end{eqnarray*}

Now for a branch stemming from $A_4$, we have:

\textbf{C.} \{$0 \longrightarrow A_4 \longrightarrow G \longrightarrow \mathbbm{Z}_p
\longrightarrow 0$, $p$ prime\}$\Longrightarrow$ $p = 2$ and $G \cong S_4$.
\\

Regarding $S_4$, we have:

\textbf{D.} There is no extension of $S_4$.
\\

Thus $G$, sitting at the end of the sequence of extensions, must be either
cyclic, dihedral, $A_4$ or $S_4$. In particular $G \subset SO (3)$.
This finishes Case 1.

\

Now suppose $G$ is not solvable.

From the above discussions we know that the Sylow $p$-subgroup of $G$,
$\tmop{Syl}_p (G)$, is cyclic for $p$-odd. For the $2$-subgroups, it turns out
$\tmop{Syl}_2 (G)$ is either cyclic or dihedral.

If $\tmop{Syl}_2 (G)$ is cyclic, then $G$ is metacyclic, in particular
solvable (cf. [17]), thus $G \subset SO (3)$.

If $\tmop{Syl}_2 (G)$ is dihedral, a theorem of Suzuki (cf. [28]) gives a
normal subgroup $G_1 \vartriangleleft G$ with $[G : G_1] \leqslant 2$ and $G
\cong Z \times L$, where $Z$ is solvable and $L \cong PSL (2, p)$,
$p$-prime. It will be shown that in this case $G$ is solvable or $G \cong
A_5$. In either case $G \subset SO (3)$.

\

This concludes the outline of the proof of Theorem 1.

Now we present the necessary details of the above process.

\

\subsection{Preliminaries}

Since $G$ acts (orientation preservingly) on $\mathbbm{R}^3$, there is an
induced orientation preserving action on $S^3$, the one point
compactification. In what follows, the action of $G$ on $S^3$ will be assumed
to be this one.

The following results will be crucial in our proof.

\begin{proposition}
  If G is cyclic and nontrivial, then $(S^3)^G = S^1$(= stands for
  homeomorphism).
\end{proposition}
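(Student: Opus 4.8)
The plan is to extract from Smith theory a single circle and then whittle it down using Euler characteristics. Since $G$ acts on $\mathbbm{R}^3$ and $S^3$ is its one-point compactification, the point $\infty$ is fixed by every element of $G$; hence $F := (S^3)^G$ is nonempty, and more generally every $(S^3)^H$ for $H \leqslant G$ contains $\infty$. I would first record a global Euler-characteristic constraint: each nontrivial $g \in G$ is an orientation-preserving homeomorphism of finite order, so its Lefschetz number is $L(g) = 1 - 1 = 0$ (it acts trivially on $H_0$ and, by orientation-preservation, trivially on $H_3(S^3;\mathbbm{Q})$). By the Lefschetz fixed point theorem for finite-order homeomorphisms, $\chi\big((S^3)^g\big) = L(g) = 0$; in particular $\chi(F) = 0$.

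Next I would apply Smith theory to a single prime. Fix a prime $p \mid |G|$ and let $P \cong \mathbbm{Z}_p$ be the unique subgroup of order $p$, with generator $\sigma$. Smith theory says $(S^3)^P = (S^3)^\sigma$ is a mod-$p$ cohomology $r$-sphere for some $-1 \leqslant r \leqslant 3$. Because $\infty \in (S^3)^P$ we have $r \geqslant 0$, and because the action is effective $r \neq 3$. A mod-$p$ cohomology $r$-sphere has Euler characteristic $1 + (-1)^r$, so the vanishing $\chi\big((S^3)^P\big) = 0$ from the previous paragraph forces $r$ to be odd; together with $0 \leqslant r \leqslant 2$ this gives $r = 1$. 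Crucially, Smith theory gives more than homology: the fixed point set of a $\mathbbm{Z}_p$-action on a cohomology manifold is again a cohomology manifold. Thus $(S^3)^P$ is a closed cohomology $1$-manifold that is a mod-$p$ homology circle; since one-dimensional generalized manifolds are genuine $1$-manifolds, $(S^3)^P$ is a closed connected topological $1$-manifold, i.e. $(S^3)^P \cong S^1$.

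It remains to cut this circle down to $F$. Since $G$ is cyclic, $P$ is normal and the quotient $G/P$ acts on the circle $C := (S^3)^P \cong S^1$, with $F = C^{\,G/P}$. A finite cyclic group acting on $S^1$ is, up to conjugacy, generated by a single rotation or reflection, so its fixed set is either empty (a nontrivial rotation), exactly two points (a reflection), or all of $C$ (the trivial action). The option $\emptyset$ is excluded because $\infty \in F$, and the option of two points is excluded because $\chi(F) = 0$ while two points have Euler characteristic $2$. Hence $G/P$ acts trivially on $C$ and $F = C \cong S^1$, as desired.

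I expect the genuine obstacle to lie in the manifold-recognition step, namely in asserting that the fixed set is an honest circle rather than merely a homology circle. Because the action is only topological, there is no normal-bundle or local-linearity structure to exploit --- indeed Bing's constructions show that $C$ may be knotted and wildly embedded in $S^3$ --- so the conclusion $(S^3)^P \cong S^1$ has to be read off from the cohomology-manifold form of Smith theory together with the fact that a one-dimensional cohomology manifold is a topological manifold. The Euler-characteristic bookkeeping and the classification of finite cyclic actions on $S^1$ are then comparatively soft, but they are what allow the passage from the prime subgroup $P$ to the full cyclic group $G$ without invoking any geometric structure on the fixed set.
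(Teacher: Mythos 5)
The paper does not actually prove this proposition --- its ``proof'' is the single citation ``See [25]'' to P.~A.~Smith's 1938 paper, where the statement (the fixed point set of a nontrivial orientation-preserving periodic transformation of $S^3$ with fixed points is a simple closed curve) is established. Your argument is therefore not being measured against an in-paper proof but against Smith's original one, and it is in essence a correct modern reconstruction of it: Smith theory identifies $(S^3)^P$ as a mod-$p$ cohomology $r$-sphere and, in its sheaf-theoretic form, as a cohomology manifold over $\mathbbm{Z}_p$; Alexander duality rules out $r=3$ for an effective action; Wilder's recognition theorem upgrades a one-dimensional cohomology manifold to a genuine $1$-manifold; and you correctly identify this manifold-recognition step as the place where Bing-type wildness threatens and where the cohomological machinery is indispensable. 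The one step I would ask you to shore up with a precise reference is the equality $\chi\bigl(\mathrm{Fix}(g)\bigr)=L(g)$ for a periodic homeomorphism $g$ of \emph{composite} order: the easy Smith-theoretic statement is only the congruence $\chi(X^{\mathbbm{Z}_p})\equiv\chi(X)\pmod p$ for prime order, which for $p=2$ does not by itself exclude $r=0$ or the two-point fixed set of a ``reflection'' of $C$ in your last paragraph. The exact integral statement for periodic maps of compact ANRs is a genuine classical theorem (it can be derived from the orbit-space formula $\chi(X/H)=\tfrac{1}{|H|}\sum_{h\in H}L(h)$ together with the Burnside-type count of fixed sets, or from Smith's parity result that orientation-preserving prime-period maps of $S^n$ have fixed sets of codimension even), but it is doing real work at exactly the point where an equivariant tubular neighborhood is unavailable, so it deserves an explicit citation rather than the phrase ``comparatively soft.'' With that reference supplied, your proof is complete and is the same in spirit as the source the paper relies on.
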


\begin{proof}
  See [25].
\end{proof}

The above proposition has the following corollary:

\begin{corollary}
  If $(S^3)^G = S^1$, then G is cyclic.
\end{corollary}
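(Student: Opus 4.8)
The plan is to reduce the statement to a free action and then read off the group from the homology of the complement. Write $C \assign (S^3)^G \cong S^1$. For a nontrivial element $g \in G$ the cyclic group $\langle g\rangle$ is nontrivial, so Proposition 1 gives $(S^3)^g = (S^3)^{\langle g\rangle} \cong S^1$. Since $g \in G$ we also have $C = (S^3)^G \subseteq (S^3)^g$, and a circle cannot be a proper subset of a circle; hence $(S^3)^g = C$ for every nontrivial $g$. Consequently no nontrivial element of $G$ fixes a point of $X \assign S^3 \setminus C$, i.e. $G$ acts freely on $X$.

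Next I would identify the homology of $X$. Because $C$ is homeomorphic to $S^1$ regardless of how it is embedded, Alexander duality (in \v{C}ech cohomology) gives $\tilde H_i(X;\mathbbm{Z}) \cong \tilde H^{2-i}(S^1;\mathbbm{Z})$, so $X$ is a connected, finite-dimensional space with the integral homology of $S^1$: $H_0(X) \cong H_1(X) \cong \mathbbm{Z}$ and $H_i(X) = 0$ for $i \geqslant 2$. This step is deliberately homological so that it is insensitive to possible wildness of $C$; the orientation hypothesis, which enters only through Proposition 1, has already been used.

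Finally I would invoke Smith theory for free actions: a finite group acting freely on a finite-dimensional mod-$p$ homology $1$-sphere has periodic mod-$p$ cohomology of period dividing $2$, and letting $p$ range over all primes this forces the integral cohomology of $G$ to be periodic of period $2$. Since $\hat H^{2}(G;\mathbbm{Z}) \cong H^{2}(G;\mathbbm{Z}) \cong G^{\mathrm{ab}}$, periodicity gives $G^{\mathrm{ab}} \cong \mathbbm{Z}/|G|$, whence $|G^{\mathrm{ab}}| = |G|$ and $G = G^{\mathrm{ab}}$ is cyclic. Equivalently one can run the Serre spectral sequence of $X \to X \times_G EG \to BG$: freeness makes $X \times_G EG \simeq X/G$ finite-dimensional, while the two nonzero rows (coming from $H^0$ and $H^1$ of $S^1$, on which $G$ acts trivially by the orientation argument) force cup product with the resulting class $e \in H^2(BG)$ to be an isomorphism in high degrees, which is again period $2$ and hence $G$ cyclic.

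The main obstacle is exactly the wildness emphasized in the introduction: one cannot appeal to a slice (normal) representation to realize $G$ as a free orientation-preserving linear action on the $2$-plane transverse to $C$, which would immediately give $G \subseteq SO (2)$, because such a local linear structure need not exist for the Bing-type embeddings. Keeping the entire argument at the level of the homology of the complement is what makes it robust; the remaining care is in correctly invoking the periodicity theorem in the topological (finitistic, finite cohomological dimension) category, and in the clean algebraic passage from ``period $2$'' to ``cyclic''.
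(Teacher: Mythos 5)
Your argument is correct and is essentially the paper's own proof: show every nontrivial element has fixed set exactly $C$ so that $G$ acts freely on the complement, observe that $S^3 - S^1$ is a cohomology $1$-sphere, and conclude that $G$ has periodic (Tate) cohomology of period $2$, hence is cyclic. You simply supply more detail than the paper does at the Alexander-duality and period-$2$-implies-cyclic steps.
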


\begin{proof}
  Let $H \subset G$ be a nontrivial subgroup. Take any nontrivial element $a
  \in H$, then $\langle a \rangle \subset H \subset G$, where $\langle a \rangle$ is the cyclic
  subgroup generated by $a$. Thus $(S^3)^{\langle a \rangle} \supset (S^3)^H \supset
  (S^3)^G$. The two ends of this sequence are homeomorphic to $S^1$ by
  Proposition 1 and the assumption. Since any embedding of an $S^1$ in another
  is surjective, $(S^3)^H = S^1 = (S^3)^G$. Now $H$ is taken arbitrarily,
  which implies that the action of $G$ on $S^3 - (S^3)^G = S^3 - S^1$ is free.
  It is a well known fact that $S^3 - S^1$ is a cohomological 1-sphere. Thus
  the Tate cohomology of $G$ has period 2, i.e., $G$ is cyclic (cf. [5]).
\end{proof}

Consequently for a finite group $G$ acting orientation preservingly on $R^3$,
$(S^3)^G = S^1$ if and only if $G$ is cyclic and nontrivial.

\

Another result which plays an important role in our considerations is the
following:

\begin{theorem}
  If G is a 2-group, then G is cyclic or dihedral.
\end{theorem}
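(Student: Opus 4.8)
The plan is to reduce the whole problem to the geometry of a single invariant circle. Since $G$ acts on $\mathbbm{R}^3$, the induced action on $S^3=\mathbbm{R}^3\cup\{\infty\}$ fixes $\infty$, so $G$ has a global fixed point throughout. First I would pick a central involution $z\in Z(G)$, which exists because $G$ is a nontrivial $2$-group, and set $C:=(S^3)^z$, a genuine circle $S^1$ by Proposition 1. As $z$ is central, $C$ is $G$-invariant, so restriction gives a homomorphism $\rho\colon G\to\operatorname{Homeo}(C)$ with kernel $K=\{g: g|_C=\mathrm{id}\}$.

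The linchpin is to identify $K$. Every element of $K$ fixes $C$ pointwise, so $C\subseteq (S^3)^K$, while $z\in K$ forces $(S^3)^K\subseteq (S^3)^z=C$; hence $(S^3)^K=C\cong S^1$, and Corollary 1 applied to $K$ gives that $K$ is cyclic. This single step secretly disposes of the generalized quaternion groups: in such a group every nontrivial cyclic subgroup contains the unique involution $z$, so the same computation forces the entire group to fix $C$ pointwise and hence to be cyclic, a contradiction. Next I would control the quotient: $G/K\cong\rho(G)$ is a finite group of homeomorphisms of $C\cong S^1$ fixing the image of $\infty$, and a finite group acting on $S^1$ with a global fixed point acts on $S^1\setminus\{\mathrm{pt}\}\cong\mathbbm{R}$, where finite subgroups of $\operatorname{Homeo}(\mathbbm{R})$ are trivial or $\mathbbm{Z}_2$. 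Thus $[G:K]\le 2$; if $G=K$ then $G$ is cyclic and we are done.

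It remains to treat $[G:K]=2$ with $K=\langle x\rangle$ cyclic and $z$ the unique involution of $K$, and this is the main obstacle: I must pin the group down to dihedral while excluding the semidihedral, modular, and abelian non-cyclic possibilities all at once. The key observation I would use is that every $g\in G\setminus K$ is forced to be an involution. Indeed $g^2\in K$, and if $g^2\neq 1$ then $\langle g^2\rangle$, being a nontrivial subgroup of the cyclic $2$-group $K$, contains $z$, so $z\in\langle g\rangle$ and $(S^3)^g\subseteq (S^3)^z=C$; but $(S^3)^g\cong S^1$ by Proposition 1, forcing $(S^3)^g=C$ and hence $g\in K$, a contradiction. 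Applying this to $xt$ for a fixed $t\in G\setminus K$ gives $(xt)^2=1$, i.e. $txt^{-1}=x^{-1}$, so $G=\langle x,t\mid x^{2^k}=t^2=1,\ txt^{-1}=x^{-1}\rangle$ is dihedral. The only nonelementary inputs are Proposition 1 and Corollary 1 together with the triviality of small circle actions; the delicate point is that these force the fixed sets of all cyclic subgroups to be honest circles nested inside $C$, which is exactly what collapses the quaternion, semidihedral, and modular cases.
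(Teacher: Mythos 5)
Your proof is correct, but it takes a genuinely different route from the paper's. The paper invokes the theorem of Dotzel and Hamrick on $p$-group actions on homology spheres to produce an equivariant comparison with an orthogonal action on $\mathbbm{R}^4$, concludes $G\subset O(3)$, and then rules out $-I_3$ using Proposition 1. You instead argue entirely inside the $S^3$-action: a central involution $z$ gives a $G$-invariant circle $C=(S^3)^z$ through $\infty$; the kernel $K$ of the restriction to $C$ satisfies $(S^3)^K=C$ and hence is cyclic by Corollary 1; the effective action of $G/K$ on $C$ fixes $\infty$, hence lives in $\tmop{Homeo}(\mathbbm{R})$ and has order at most $2$; and every $g\in G\setminus K$ must be an involution, since otherwise $z\in\langle g\rangle$ would force the circle $(S^3)^{\langle g\rangle}$ to coincide with $C$ and put $g$ in $K$. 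This pins $G$ down as cyclic or dihedral (and, as you note, disposes of the generalized quaternion groups for free, since there every nontrivial subgroup contains $z$). Your argument is more elementary and self-contained, using nothing beyond Proposition 1, Corollary 1, and the triviality of finite groups of orientation-preserving homeomorphisms of $\mathbbm{R}$. What the paper's heavier route buys is Remark 1: the Dotzel--Hamrick step does not use orientation-preservation, so it shows that an arbitrary $2$-group acting on $\mathbbm{R}^3$ embeds in $O(3)$, whereas your argument depends on Proposition 1 (circle fixed sets, i.e.\ orientation-preserving Smith theory) at every stage and does not yield that statement.
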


\begin{proof}
  By the result of Dotzel and Hamrick (cf. [9]), there is an orthogonal action
  of $G$ on $\mathbbm{R}^4$ such that there exist a map $\phi : \Sigma^k S^3
  \longrightarrow \Sigma^k S^3, k \geqslant 1$ equivariant (with respect to
  the original action for the domain and the action induced by the orthogonal
  action on $S^3 \subset \mathbbm{R}^4$ for the codomain) which induces
  $\mathbbm{Z}_p$-homology isomorphisms on (suspension of) fixed point sets of
  non-trivial subgroups. These fixed point sets are spheres, thus their
  dimension are preserved by $\phi$.
  
  Since $(S^3)^G \neq \varnothing$ in the original action, so the same holds
  for the orthogonal action. Let $v \in S^0$ be a fixed point. Let $W$ be the
  orthogonal complement of $v$ in $\mathbbm{R}^4$. $G$ acts orthogonally on
  $W$. Checking the fixed point set, one sees that this action is faithful,
  thus $G \subset O (3)$.
  
  Now suppose $G$ is neither cyclic or dihedral, then as a subgroup of
  $O (3)$, $G$ contains $-I_3$. Let $A = -I_3$
  
  Since $ \langle A \rangle $ is cyclic, then $(S^3)^{\langle A \rangle} = S^1$ in the original action
  by Proposition 1, and hence the same is true for the orthogonal action on
  $S^3 \subset \mathbbm{R}^4$. This implies that $(\mathbbm{R}^4)^{\langle A \rangle}
  =\mathbbm{R}^2$. This is impossible. Thus $G$ has to be cyclic or dihedral.
\end{proof}

\begin{remark}
  The first two paragraphs do not need the assumption on the action to be
  orientation preserving, thus we have shown that any 2-group acting on
  $\mathbbm{R}^3$ is a subgroup of $O (3)$.
\end{remark}

\subsection{Obstruction Kernels}

Our proof of Theorem 1 relies essentially on detecting in a given group a
typical subgroup that can not act on $\mathbbm{R}^3$. These groups will be
called obstruction kernels (abbreviated as O.K.) in what follows. In this
subsection we list all of them and prove they cannot act faithfully and
orientation preservingly on $\mathbbm{R}^3$.

We start with the following lemma:

\begin{lemma}
  If G acts orientation preservingly on $\mathbbm{R}^3$, $H, H'$ are cyclic
  subgroups of G, $H \cap H' \neq \{ 0 \},$ $\langle H \cup H' \rangle = G$ where $\langle H \cup H'
  \rangle$ denotes the subgroup generated by $H \cup H'$ . Then $G$ is cyclic.
\end{lemma}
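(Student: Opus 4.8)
The plan is to reduce everything to a computation of the global fixed-point set $(S^3)^G$ and then to invoke Corollary 1, exactly in the spirit of the proof of that corollary. Passing to the induced orientation-preserving action on the one-point compactification $S^3$, the goal becomes to show that $(S^3)^G = S^1$; cyclicity of $G$ then follows at once from Corollary 1.

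First I would choose a nontrivial element $a \in H \cap H'$, which exists by the hypothesis $H \cap H' \neq \{ 0 \}$. The cyclic subgroups $\langle a \rangle$, $H$ and $H'$ are all nontrivial, so Proposition 1 gives that $(S^3)^{\langle a \rangle}$, $(S^3)^H$ and $(S^3)^{H'}$ are each homeomorphic to $S^1$. The essential point is to upgrade this to an equality of actual subsets of $S^3$. Since $\langle a \rangle \subseteq H$ and $\langle a \rangle \subseteq H'$, passing to fixed sets reverses these inclusions and yields $(S^3)^H \subseteq (S^3)^{\langle a \rangle}$ and $(S^3)^{H'} \subseteq (S^3)^{\langle a \rangle}$. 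Each of these is an inclusion of one $S^1$ into another, hence surjective (the same elementary fact used in the proof of Corollary 1: any embedding of $S^1$ into $S^1$ is surjective). Therefore $(S^3)^H = (S^3)^{\langle a \rangle} = (S^3)^{H'}$ as subsets of $S^3$; call this common circle $C$.

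Next I would exploit that $G = \langle H \cup H' \rangle$. A point of $S^3$ is fixed by $G$ precisely when it is fixed by every element of the generating set $H \cup H'$, that is, precisely when it lies in both $(S^3)^H$ and $(S^3)^{H'}$. Hence $(S^3)^G = (S^3)^H \cap (S^3)^{H'} = C \cap C = C$, which is a circle. Applying Corollary 1 now gives that $G$ is cyclic, as desired.

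The only genuinely delicate step is the second paragraph, namely the passage from ``homeomorphic to $S^1$'' to ``equal as subsets'', which rests on the surjectivity of the two circle inclusions. This is also exactly where the hypothesis $H \cap H' \neq \{ 0 \}$ does its work: it is the common cyclic subgroup $\langle a \rangle$ that forces the fixed circles of $H$ and of $H'$ to coincide rather than merely to be abstractly homeomorphic. Without a nontrivial common element the two fixed circles could be distinct, their intersection need not be a circle, and $(S^3)^G$ could fail to be $S^1$, so the conclusion would break down. Everything else is formal manipulation of fixed-point sets.
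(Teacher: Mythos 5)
Your proof is correct and follows essentially the same route as the paper: both arguments show that the fixed circles of $H$, $H'$, and a common nontrivial cyclic subgroup coincide as subsets (using surjectivity of circle-in-circle embeddings), deduce $(S^3)^G = S^1$ from $G = \langle H \cup H'\rangle$, and conclude via Corollary 1. The only cosmetic difference is that you work with $\langle a\rangle$ for a nontrivial $a \in H \cap H'$ where the paper uses $H \cap H'$ itself.
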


\begin{proof}
  Consider the inclusions $(S^3)^H \subset (S^3)^{H \cap H'} \supset
  (S^3)^{H'} $. Since these subgroups are all nontrivial and cyclic,
  Proposition 1 implies that $(S^3)^H = (S^3)^{H'} = S^1$. Now $G = \langle H \cup
  H' \rangle$ implies $(S^3)^G = S^1$, thus $G$ is cyclic by Corollary 1.
\end{proof}

\begin{flushleft}
{\tmstrong{Obstruction Kernel of Type 0}}: $G = (\mathbbm{Z}_p \oplus
\mathbbm{Z}_q) \rtimes_{\varphi} \mathbbm{Z}_2$ where (the generator of)
$\mathbbm{Z}_2$ acts as identity on $\mathbbm{Z}_p$, and as multiplication by
$- 1$ on $\mathbbm{Z}_q$, $p, q$ distinct odd primes.
\end{flushleft}

\begin{proof}
  There is a canonical subgroup $\mathbbm{Z}_p \rtimes_{\varphi} \mathbbm{Z}_2
  \subset (\mathbbm{Z}_p \oplus \mathbbm{Z}_q) \rtimes_{\varphi} \mathbbm{Z}_2 = G$.
  Assume such action of $G$ on $\mathbbm{R}^3$ exists. Since the action on
  $\mathbbm{Z}_p$ is trivial, $\mathbbm{Z}_p \rtimes_{\varphi} \mathbbm{Z}_2
  \cong\mathbbm{Z}_{2 p}$. Letting $H =\mathbbm{Z}_{2 p}, H' =\mathbbm{Z}_p \oplus
  \mathbbm{Z}_q =\mathbbm{Z}_{p q}$, we get $H \cap H' =\mathbbm{Z}_p$, $\langle H
  \cup H' \rangle = G$. So by Lemma 1 $G$ is cyclic. But this implies that the
  subgroup $\mathbbm{Z}_q \rtimes_{\varphi} \mathbbm{Z}_2$ is cyclic,
  contradicting to the assumption.
\end{proof}

\begin{flushleft}
{\tmstrong{Obstruction Kernel of Type 1}}: $G =\mathbbm{Z}_{2 m} \times
\mathbbm{Z}_2, m \geqslant 2$.
\end{flushleft}

\begin{proof}
  Assume the action exists. Let $H =\mathbbm{Z}_{2 m} \subset G$, $H' = \langle (1,
  1) \rangle \subset G$. Then $H, H'$ satisfy the assumption of Lemma 1, thus $G =
  \mathbbm{Z}_{2 m} \times \mathbbm{Z}_2$ is cyclic, which is impossible.
\end{proof}

\begin{flushleft}
{\tmstrong{Obstruction Kernel of Type 2}}: $G =\mathbbm{Z}_q \rtimes_{\varphi}
\mathbbm{Z}_{2^{k + 1}}, k \geqslant 1 \nocomma,$ $q$ odd prime. $\varphi (1)$
is multiplication by $(- 1)$.
\end{flushleft}

\begin{proof}
  Assume the action exists. Let $H =\mathbbm{Z}_q \rtimes_{\varphi}
  \mathbbm{Z}_{2^k} \subset\mathbbm{Z}_q \rtimes_{\varphi} \mathbbm{Z}_{2^{k + 1}}$,
  where $\mathbbm{Z}_{2^k}$ is generated by $2 \in \mathbbm{Z}_{2^{k + 1}}$.
  Then $H$ is cyclic since the action $\varphi$ restricts to a trivial action
  on $\mathbbm{Z}_{2^k}$. Let $H'$ be the subgroup $\mathbbm{Z}_{2^{k + 1}}$
  of $G$. Then $H, H'$ satisfy the condition of Lemma 1, and hence $G$ is
  cyclic, which once again is impossible.
\end{proof}

\begin{flushleft}
{\tmstrong{Obstruction Kernel of Type 3}}: Generalized quaternion group $Q_{4
m} \nocomma, (m \geqslant 2)$.
\end{flushleft}

\begin{proof}
  Assume the action exists. Let $q$ be a prime factor of $m$. Note that there
  is a canonical subgroup $Q_{4 q} \subset Q_{4 m}$.
  
  If $q$ is odd, $Q_{4 q} =\mathbbm{Z}_q \rtimes_{\varphi} \mathbbm{Z}_4$,
  $\varphi (1)$ is multiplication by $(- 1)$. This is O.K. of Type 2.
  
  If $q = 2$, then $Q_{4q} = Q_8$ is a 2-group. This case is excluded by Theorem 2. 
\end{proof}

\begin{flushleft}
{\tmstrong{Obstruction Kernel of Type 4}}: $G =\mathbbm{Z}_{2^k}
\rtimes_{\varphi} \mathbbm{Z}_2, k \geqslant 3$, $\varphi (1)$ is
multiplication by $2^{k - 1} \pm 1$.
\end{flushleft}

\begin{proof}
 This is a corollary of Theorem 2.
\end{proof}

\begin{flushleft}
{\tmstrong{Obstruction Kernel of Type 5}}: $G =\mathbbm{Z}_p \rtimes_{\varphi}
\mathbbm{Z}_4$, $p$ prime, $p \equiv 1 (\tmop{mod} 4)$, $\varphi (1) = n \in
\mathbbm{Z}_p^{\ast} = \tmop{Aut} \mathbbm{Z}_p$ where $n^2 = -1 (\tmop{mod}
p)$.
\end{flushleft}

\begin{proof}
  Assume the action exists. Then $\mathbbm{Z}_p$ is a normal subgroup of $G$,
  and $\mathbbm{Z}_4$ acts on the fixed point set $(S^3)^{\mathbbm{Z}_p} =
  S^1$ (Proposition 1). Let $f$ be the restriction of $\varphi (0, 1)$ on
  $(S^3)^{\mathbbm{Z}_p} = S^1$. Note that $f^4$, being the restriction of
  $\varphi (0, 4)$, is the identity, and a homeomorphism $f$ of $S^1$
  satisfying this must also satisfy $f^2 = \tmop{id}$. That is, $(0, 2) \in
  \mathbbm{Z}_p \rtimes_{\varphi} \mathbbm{Z}_4$ acts on
  $(S^3)^{\mathbbm{Z}_p} = S^1$ by the identity. Thus $(S^3)^{\mathbbm{Z}_p
  \rtimes_{\varphi} \mathbbm{Z}_2} = (S^3)^{\mathbbm{Z}_p} = S^1$, where
  $\mathbbm{Z}_p \rtimes_{\varphi} \mathbbm{Z}_2 \subset \mathbbm{Z}_p
  \rtimes_{\varphi} \mathbbm{Z}_4$ is the canonical subgroup. Hence
  $\mathbbm{Z}_p \rtimes_{\varphi} \mathbbm{Z}_2$ is cyclic. But the action
  $\varphi$ restricted to $\mathbbm{Z}_2 \subset \mathbbm{Z}_4$ is nontrivial
  by definition which is a contradiction.
\end{proof}

Before presenting the O.K. of Type 6, we need to address the convention
regarding the automorphism group of the dihedral group $D_{2 n}$. It is a well
known fact that $\tmop{Aut} D_{2 n} \cong \mathbbm{Z}_n \rtimes_{}
\mathbbm{Z}_n^{\ast}$ where the semi-direct product is with respect to the
canonical identification $\tmop{Aut} \mathbbm{Z}_n \cong
\mathbbm{Z}^{\ast}_n$. An element $(t, s) \in \mathbbm{Z}_n \rtimes_{}
\mathbbm{Z}_n^{\ast}$ maps $a^i b$ to $a^{s i + t} b$ and $a^i$ to $a^{s i}$.
Here $a, b$ are standard generators of $D_{2 n}$ ($a^n = b^2 = \tmop{id}, a b
a = b$).

\begin{flushleft}
{\tmstrong{Obstruction Kernel of Type 6}}: $G = D_{2 n} \rtimes_{\varphi}
\mathbbm{Z}_2$, $n$ even, $n \geqslant 4$, $\varphi (1)$ is $(t, - 1) \in
\mathbbm{Z}_n \rtimes_{} \mathbbm{Z}_n^{\ast}$, $t$ even.
\end{flushleft}

\begin{proof}
  Assume the action exists. Let $i = \frac{t + 2}{2}$, then $2 i - t = 2$. Now
  $(a^i b, 1)^2 = (a^{2 i - t}, 0) = (a^2, 0)$. Define $H = \langle (a^i b, 1) \rangle$,
  and note that $| H | = n$. Define $H' = \langle (a, 0) \rangle$ with $| H' | = n$. Then
  $H \cap H' = \langle (a^2, 0) \rangle \neq \{ 0 \}$. Let $G' = \langle H \cup H' \rangle \nocomma$,
  then Lemma 1 can be applied to $G', H, H'$. Thus $G'$ is cyclic, of order$>
  n$. But examining the elements of $G = D_{2 n} \rtimes_{\varphi}
  \mathbbm{Z}_2$, none has order exceeding $n$, a contradiction.
\end{proof}

\

\subsection{The Solvable Case}

Now we consider the case of solvable $G$. We will determine what happens for
each extension step, then an induction will produce the desired result.

\subsubsection{Extension of Cyclic Groups}

We are going to prove A.1 and A.2 of the Section 2.1. Let us start with A.1.

\begin{proposition}
  Suppose there is a short exact sequence $0 \rightarrow \mathbbm{Z}_n
  \rightarrow G \rightarrow \mathbbm{Z}_p \rightarrow 0$, $p$ odd prime. Then
  $G$ is cyclic.
\end{proposition}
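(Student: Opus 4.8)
The plan is to analyze the $G$-action through the fixed circle of the normal subgroup $\mathbbm{Z}_n$ and to play Proposition 1 and Corollary 1 against Lemma 1. If $n=1$ then $G=\mathbbm{Z}_p$ is already cyclic, so assume $n\geqslant 2$. Then $\mathbbm{Z}_n$ is nontrivial cyclic and $(S^3)^{\mathbbm{Z}_n}=S^1$ by Proposition 1. Since $\mathbbm{Z}_n\vartriangleleft G$, the quotient $\mathbbm{Z}_p=G/\mathbbm{Z}_n$ acts on this circle; as $p$ is odd, the induced map $\mathbbm{Z}_p\to\pi_0\,\mathrm{Homeo}(S^1)=\mathbbm{Z}_2$ is trivial, so $\mathbbm{Z}_p$ acts by orientation-preserving homeomorphisms and hence (up to conjugacy) by rotations. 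If this action is trivial, every element of $G$ fixes $(S^3)^{\mathbbm{Z}_n}$ pointwise, so $(S^3)^G=(S^3)^{\mathbbm{Z}_n}=S^1$ and Corollary 1 yields that $G$ is cyclic.

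The substance lies in the case where $\mathbbm{Z}_p$ rotates the circle nontrivially, and therefore freely. First I would invoke Lemma 1 with $H=\langle g\rangle$ and $H'=\mathbbm{Z}_n$, where $g$ is any lift of a generator of $\mathbbm{Z}_p$: since $\langle H\cup H'\rangle=G$ and $H\cap H'=\langle g^p\rangle$, if some lift satisfies $g^p\neq 1$ then Lemma 1 forces $G$ cyclic and we are done. Hence I may assume every lift has order exactly $p$; in particular the sequence splits, $G=\mathbbm{Z}_n\rtimes_\theta\mathbbm{Z}_p$, and the conjugation homomorphism $\theta\colon\mathbbm{Z}_p\to\mathrm{Aut}(\mathbbm{Z}_n)$ governs the remaining structure. (If $\theta$ is trivial and $p\nmid n$ the product is cyclic; and if $p^2\mid n$ a lift $sg$ with $s\in\mathbbm{Z}_n$ of order $p^2$ would have order exceeding $p$, already handled above.) Moreover, because $g$ acts freely on $(S^3)^{\mathbbm{Z}_n}$, we record that $(S^3)^G=(S^3)^{\mathbbm{Z}_n}\cap(S^3)^{\langle g\rangle}=\varnothing$.

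It remains to show that no non-cyclic $G$ of this shape can act, and this is where I expect all the difficulty to concentrate. Since $\mathrm{Aut}(\mathbbm{Z}_{2^a})$ is a $2$-group, the odd-order map $\theta$ is trivial on the $2$-part of $\mathbbm{Z}_n$, so a nontrivial $\theta$ must act nontrivially on $\mathbbm{Z}_{q^a}$ for some odd prime $q$, and two kinds of minimal witnesses arise. In the first kind the relevant subgroup contains $E\cong\mathbbm{Z}_p\oplus\mathbbm{Z}_p$: this happens when $\theta$ is trivial but $p\mid n$ with $p^2\nmid n$, and also when $\theta$ is nontrivial on the $p$-part (a non-cyclic $p$-group of odd order always contains $\mathbbm{Z}_p\oplus\mathbbm{Z}_p$). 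Each of the $p+1$ order-$p$ subgroups of $E$ fixes a circle while $E$ itself has empty fixed set (one checks $(S^3)^E=\varnothing$), and substituting the fixed-set dimensions $1,\dots,1$ and $-1$ into Borel's dimension formula for an elementary abelian $p$-group acting on a mod-$p$ cohomology $3$-sphere gives $4=2(p+1)$, impossible for an odd prime. The second, genuinely harder, kind is the nonabelian metacyclic group $\mathbbm{Z}_q\rtimes_\theta\mathbbm{Z}_p$ with $q\neq p$ odd and $p\mid q-1$. It has odd order with cyclic Sylow subgroups, so it contains none of the obstruction kernels of \S2.3 and no rank-two elementary abelian subgroup on which to run Borel's formula; its fixed data is a configuration of $q+1$ pairwise disjoint circles, namely the circle fixed by the normal $\mathbbm{Z}_q$ (rotated freely by $\mathbbm{Z}_p$) together with the $q$ conjugate circles fixed by the Sylow $p$-subgroups. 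The obstacle is precisely to extract a contradiction from this configuration: the natural attack, modeled on the proof of Corollary 1, is to pass to the free $\mathbbm{Z}_q$-quotient of the complement of its fixed circle, descend the residual $\mathbbm{Z}_p$-action, and exploit the presence of a further fixed circle there to violate the periodicity of Tate cohomology (cf. [5]); making this cohomological bookkeeping rigorous across the possibly wildly embedded fixed sets is, I expect, the heart of the matter.
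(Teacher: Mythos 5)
Your proof stalls exactly where you say it does: the metacyclic case $\mathbbm{Z}_q\rtimes_\theta\mathbbm{Z}_p$ is left as an unproved ``heart of the matter,'' so the argument as written is incomplete. The missing idea is one that is available from the very first step: the action of $G$ on $S^3$ is the one-point compactification of an action on $\mathbbm{R}^3$, so the point $\infty$ is fixed by \emph{every} element of $G$. In particular $\infty\in(S^3)^{\mathbbm{Z}_n}$, and $\infty$ is a fixed point of the induced $\mathbbm{Z}_p$-action on the circle $(S^3)^{\mathbbm{Z}_n}=S^1$. You correctly observed that an odd-order homeomorphism of $S^1$ is conjugate to a rotation; a rotation with a fixed point is the identity. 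Hence the quotient action on the circle is trivial, $(S^3)^G=(S^3)^{\mathbbm{Z}_n}=S^1$, and Corollary 1 gives that $G$ is cyclic. That is essentially the paper's entire (four-line) proof. Your ``nontrivial, hence free, rotation'' branch is therefore vacuous; indeed your own conclusion $(S^3)^G=\varnothing$ in that branch already contradicts $\infty\in(S^3)^G$, which should have been the signal that the case cannot occur.

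The machinery you then set up is partly sound but beside the point: the application of Lemma 1 (forcing a splitting unless some lift of the generator has order exceeding $p$) is correct, and the Borel-formula count $4=2(p+1)$ for $\mathbbm{Z}_p\oplus\mathbbm{Z}_p$ would yield a contradiction \emph{if} its total fixed set were empty --- but it never is, again because of $\infty$. Once the global fixed point is in play, none of the case analysis, and in particular none of the unfinished cohomological bookkeeping for $\mathbbm{Z}_q\rtimes_\theta\mathbbm{Z}_p$, is needed.
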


\begin{proof}
  Proposition 1 implies that $(S^3)^{\mathbbm{Z}_n} = S^1$. Now
  $\mathbbm{Z}_p$ acts on this $S^1$, but $p$ is odd, so the action has to be
  trivial. Thus $(S^3)^G = ((S^3)^{\mathbbm{Z}_n})^{\mathbbm{Z}_p} = S^1$, and
  $G$ is cyclic by Corollary 1.
\end{proof}

\begin{corollary}
  If $| G |$ is odd, then $G$ is cyclic.
\end{corollary}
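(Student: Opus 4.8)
The plan is to induct on $|G|$, using Proposition 2 as the inductive engine. The base case $|G| = 1$ is immediate, so assume $|G| > 1$ is odd and that every group of smaller odd order acting topologically and orientation preservingly on $\mathbbm{R}^3$ is cyclic.

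First I would invoke the Feit--Thompson theorem, which guarantees that a group of odd order is solvable. Solvability is exactly what I need in order to extract a normal subgroup of prime index: the commutator subgroup $[G, G]$ is a proper normal subgroup of $G$, so the abelianization $G / [G, G]$ is a nontrivial finite abelian group and therefore possesses a subgroup of prime index $p$. Pulling this subgroup back along the quotient map $G \rightarrow G / [G, G]$ yields a normal subgroup $N \vartriangleleft G$ with $G / N \cong \mathbbm{Z}_p$. Since $|G|$ is odd and $p$ divides $|G|$, the index $p$ is an odd prime.

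Next, the subgroup $N$ inherits a topological, orientation preserving action on $\mathbbm{R}^3$ by restricting the action of $G$, and $|N| < |G|$ is odd. The induction hypothesis then applies to $N$, so $N$ is cyclic, say $N \cong \mathbbm{Z}_n$. At this point we are confronted with precisely the short exact sequence $0 \rightarrow \mathbbm{Z}_n \rightarrow G \rightarrow \mathbbm{Z}_p \rightarrow 0$ with $p$ an odd prime, and Proposition 2 applies directly to conclude that $G$ is cyclic, completing the induction.

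I expect the only genuinely nontrivial ingredient to be the appeal to Feit--Thompson; without it there is no elementary route to solvability for an arbitrary odd order group, and solvability is what furnishes the normal subgroup of odd prime index that feeds the induction. The remaining points are routine: that the abelianization of a nontrivial solvable group admits a prime-index subgroup, and that restriction preserves the hypotheses of the induction (an orientation preserving action restricts to an orientation preserving action of any subgroup). With these in place the argument is a clean descent through Proposition 2.
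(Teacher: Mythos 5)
Your proof is correct and is exactly the argument the paper intends: the paper's own proof is the one-line remark that odd order groups are solvable (Feit--Thompson, its reference [12]), leaving implicit the induction through Proposition 2 that you spell out. Filling in the details via the derived subgroup to produce a normal subgroup of odd prime index is the standard way to make that one-liner precise, so there is no substantive difference in approach.
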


\begin{proof}
  This is because all odd order groups are solvable. (cf. [12]).
\end{proof}

The assertion A.2 is more delicate, and we have to discuss several cases,
divided by the number of 2-factors in the order of the cyclic group.

\begin{proposition}
  Suppose there is a short exact sequence $0 \rightarrow \mathbbm{Z}_n
  \rightarrow G \rightarrow \mathbbm{Z}_2 \rightarrow 0$, n odd. Then G is
  cyclic or dihedral.
\end{proposition}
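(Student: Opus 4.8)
The plan is to first split the extension, reduce the problem to classifying the possible involutions in $\tmop{Aut} \mathbbm{Z}_n$, and then use the Obstruction Kernel of Type 0 to eliminate every case except the cyclic and dihedral ones.

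First I would produce a splitting. Since $2 \mid |G| = 2 n$, Cauchy's theorem yields an element $b \in G$ of order $2$; as $\mathbbm{Z}_n$ has odd order it contains no involution, so $b \nin \mathbbm{Z}_n$ and $b$ maps onto the generator of the quotient $\mathbbm{Z}_2$. Hence $G = \mathbbm{Z}_n \rtimes_{\varphi} \langle b \rangle \cong \mathbbm{Z}_n \rtimes_{\varphi} \mathbbm{Z}_2$. Writing $a$ for a generator of $\mathbbm{Z}_n$, conjugation by $b$ sends $a \mapsto a^s$ for some $s \in \mathbbm{Z}_n^{\ast} = \tmop{Aut} \mathbbm{Z}_n$, and $b^2 = \tmop{id}$ forces $s^2 \equiv 1 \pmod{n}$. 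Thus everything hinges on which involutions $s$ of $\mathbbm{Z}_n^{\ast}$ can occur.

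Next I would analyze these involutions via the Chinese Remainder Theorem. Factoring $n = \prod_i p_i^{a_i}$ into odd prime powers, $s^2 \equiv 1 \pmod{n}$ holds iff $s^2 \equiv 1 \pmod{p_i^{a_i}}$ for every $i$. Each $\mathbbm{Z}_{p_i^{a_i}}^{\ast}$ is cyclic of even order (as $p_i$ is odd), so its only involution is $-1$; hence $s \equiv \pm 1 \pmod{p_i^{a_i}}$ for each $i$. If $s \equiv 1$ for all $i$, then $s \equiv 1 \pmod{n}$, the action $\varphi$ is trivial, and $G$ is cyclic. If $s \equiv -1$ for all $i$, then $s \equiv -1 \pmod{n}$ and $G \cong D_{2 n}$ is dihedral. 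The remaining ``mixed'' case is what must be excluded.

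In the mixed case there exist distinct odd primes $p, q \mid n$ with $s \equiv 1 \pmod{p}$ and $s \equiv -1 \pmod{q}$. Inside $\mathbbm{Z}_n$ take the (characteristic) subgroups $\mathbbm{Z}_p$ and $\mathbbm{Z}_q$ of orders $p$ and $q$ living in the corresponding Sylow factors; since $s \equiv \pm 1$ modulo the full prime power reduces to $\pm 1$ modulo the prime, conjugation by $b$ acts on $\mathbbm{Z}_p$ trivially and on $\mathbbm{Z}_q$ by multiplication by $-1$. Therefore $\langle \mathbbm{Z}_p \cup \mathbbm{Z}_q \cup \langle b \rangle \rangle \leqslant G$ is exactly $(\mathbbm{Z}_p \oplus \mathbbm{Z}_q) \rtimes_{\varphi} \mathbbm{Z}_2$ with $\mathbbm{Z}_2$ acting as the identity on $\mathbbm{Z}_p$ and as $-1$ on $\mathbbm{Z}_q$, i.e. the Obstruction Kernel of Type 0. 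As $G$ acts faithfully and orientation preservingly on $\mathbbm{R}^3$, so does this subgroup by restriction, contradicting the fact that Type 0 cannot so act. Hence the mixed case is impossible, and $G$ is cyclic or dihedral. The one genuinely load-bearing idea is recognizing that every mixed involution forces a Type 0 obstruction kernel; the splitting and the CRT bookkeeping are routine once that target is in view.
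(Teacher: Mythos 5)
Your proposal is correct and follows essentially the same route as the paper: split the extension, observe via the prime decomposition of $n$ that $\varphi(1)$ acts by $\pm 1$ on each Sylow factor, and rule out the mixed case by exhibiting an Obstruction Kernel of Type 0. The only cosmetic difference is that you obtain the splitting from Cauchy's theorem rather than from the general coprime-order splitting result the paper cites; both are valid.
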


\begin{proof}
  Let $p_1^{n_1} \ldots p_k^{n_k}$ be the prime decomposition of $n$. Since
  $(n, 2) = 1$, the sequence splits (cf. [5] p.93). Thus
  \[ G \cong \mathbbm{Z}_n \rtimes_{\varphi} \mathbbm{Z}_2, \varphi :
     \mathbbm{Z}_2 \rightarrow \tmop{Aut} \mathbbm{Z}_n = \tmop{Aut}
     \underset{i}{\Pi} \mathbbm{Z}_{p_i^{n_i}} = \underset{i}{\Pi} \tmop{Aut}
     \mathbbm{Z}_{p_i^{n_i}} \]
  The component of $\varphi (1)$ on each $\tmop{Aut} \mathbbm{Z}_{p_i^{n_i}} $
  is a multiplication by $\pm 1$ (because $\varphi (1)$ has order $\leq$ 2, and
  $\tmop{Aut} \mathbbm{Z}_{p_i^{n_i}}$ is cyclic thus have a unique element of
  order 2).
  
  If all components are $+ 1$, then $G \cong \mathbbm{Z}_n \times
  \mathbbm{Z}_2 \cong \mathbbm{Z}_{2 n}$ is a cyclic group.
  
  If all components are $- 1$, $\varphi (1)$ is multiplication of $- 1$ on
  $\mathbbm{Z}_n$, then $G \cong D_{2 n}$ is the dihedral group.
  
  If both $\pm 1$ exist, take $p$, $q$ prime factor of $n$ where $\varphi (1)$
  is multiplication of $+ 1$, $- 1$ respectively on the corresponding
  components. Now there is a canonical subgroup $G_0 \subset G$, i.e. $G_0 : =
  (\mathbbm{Z}_p \oplus \mathbbm{Z}_q) \rtimes_{\varphi} \mathbbm{Z}_2 \subset
  G$. But $G_0$ is O.K. of Type 0; a contradiction.
  
  In conclusion, the only possibility is to be a cyclic or dihedral group.
\end{proof}

\begin{proposition}
  Suppose there is a short exact sequence $0 \rightarrow \mathbbm{Z}_{2 n}
  \rightarrow G \rightarrow \mathbbm{Z}_2 \rightarrow 0$, n odd. Then G is
  cyclic or dihedral.
\end{proposition}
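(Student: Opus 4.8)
The plan is to bypass the (possibly non-split) extension $0\to\mathbbm{Z}_{2n}\to G\to\mathbbm{Z}_2\to 0$ by instead splitting off the odd part of $\mathbbm{Z}_{2n}$, which is a normal Hall subgroup. Write $N=\mathbbm{Z}_{2n}=\langle a\rangle$. Since $n$ is odd, $N=M\times Z$ with $M=\langle a^2\rangle\cong\mathbbm{Z}_n$ the odd part and $Z=\langle a^n\rangle\cong\mathbbm{Z}_2$; both are characteristic in $N$, hence normal in $G$. First I would verify that $Z$ is in fact central in $G$: for any $g\in G\setminus N$ conjugation sends $a\mapsto a^r$ with $r$ coprime to $2n$ (so $r$ odd), whence $a^n\mapsto a^{nr}=a^n$. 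A central $2$-subgroup lies in every Sylow $2$-subgroup $P$ of $G$, and here $|P|=4$.

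Next I would apply Schur--Zassenhaus to the normal Hall subgroup $M$ (order $n$, index $4$) to write $G=M\rtimes_{\psi}P$, so that $P\cong\mathbbm{Z}_4$ or $P\cong\mathbbm{Z}_2\oplus\mathbbm{Z}_2$. Because $Z\subseteq P$ is central, $\psi$ kills $Z$ and factors through $P/Z\cong\mathbbm{Z}_2$; thus the entire action is governed by a single involution $\beta\assign\psi(g)\in\tmop{Aut}M=\mathbbm{Z}_n^{\ast}$ (for $g\in P\setminus Z$), with $\beta^2=1$. Exactly as in the proof of Proposition 3, writing $n=\prod_i p_i^{n_i}$ and $\mathbbm{Z}_n^{\ast}=\prod_i\tmop{Aut}\mathbbm{Z}_{p_i^{n_i}}$, such a $\beta$ is multiplication by $\pm 1$ on each factor, so it is described by the set $S$ of primes where the sign is $-1$.

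The proof then splits on $P$ and on $S$, each ``bad'' possibility being matched to an obstruction kernel. If $P\cong\mathbbm{Z}_4$ and $S\neq\varnothing$, choose a prime $q\in S$ and the order-$q$ subgroup $C_q\subseteq M$; then $\langle C_q,g\rangle\cong\mathbbm{Z}_q\rtimes\mathbbm{Z}_4$ with $g$ acting by inversion and $g^2=a^n$ acting trivially, which is O.K.\ of Type 2 --- a contradiction. Hence $S=\varnothing$, $\beta=1$, and $G\cong\mathbbm{Z}_n\times\mathbbm{Z}_4\cong\mathbbm{Z}_{4n}$ is cyclic. If $P\cong\mathbbm{Z}_2\oplus\mathbbm{Z}_2$ (so $g^2=1$) there are three subcases: $S=\varnothing$ gives $G\cong\mathbbm{Z}_{2n}\times\mathbbm{Z}_2$, which is O.K.\ of Type 1; a mixed $S$ (neither empty nor full) produces, for primes $p\notin S$, $q\in S$, the subgroup $\langle C_p,C_q,g\rangle\cong(\mathbbm{Z}_p\oplus\mathbbm{Z}_q)\rtimes\mathbbm{Z}_2$ with the $+1/-1$ action, which is O.K.\ of Type 0; both are excluded. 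This leaves $S$ full, i.e.\ $\beta=-1$, whence $G\cong D_{2n}\times\mathbbm{Z}_2$, and I would close by recording the elementary fact that for $n$ odd $D_{2n}\times\mathbbm{Z}_2\cong D_{4n}$, so $G$ is dihedral. The degenerate case $n=1$, where $|G|=4$, is immediate.

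The main obstacle is the very first step: unlike Proposition 3, here $\gcd(2n,2)=2$, so the given sequence need not split and one cannot directly write $G\cong\mathbbm{Z}_{2n}\rtimes\mathbbm{Z}_2$. The key maneuver is to recognize the unique involution $a^n$ of $\mathbbm{Z}_{2n}$ as central and to split off the odd Hall subgroup $M$ instead, after which $G=M\rtimes P$ with $|P|=4$ and the problem reduces to a finite, obstruction-kernel-driven case check. The remaining delicate point is the bookkeeping that correctly classifies each branch: when $P\cong\mathbbm{Z}_4$ acts by inversion the group is the dicyclic $Q_{4n}$, which must be recognized and discarded via O.K.\ of Type 2, whereas the Klein-four case with inversion is the genuinely dihedral $D_{4n}$; keeping these apart is what forces $G$ to be cyclic precisely when $P$ is cyclic and dihedral precisely when $P\cong\mathbbm{Z}_2\oplus\mathbbm{Z}_2$.
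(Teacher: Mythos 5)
Your proof is correct, and it reaches the same five-way case analysis as the paper by a genuinely different decomposition. The paper attacks the extension $0\to\mathbbm{Z}_{2n}\to G\to\mathbbm{Z}_2\to 0$ head-on: it computes $H^2(\mathbbm{Z}_2,\mathbbm{Z}_{2n})\cong\mathbbm{Z}_2$ for each action $\varphi$, exhibits an explicit representative of the split and of the non-split class (the latter as $\mathbbm{Z}_Q\rtimes_{\phi}\mathbbm{Z}_{4P}$), and then hunts for obstruction kernels inside each representative. You instead split off the odd Hall subgroup $M\cong\mathbbm{Z}_n$ (characteristic in $\mathbbm{Z}_{2n}$, hence normal), observe that the unique involution $a^n$ is central, and apply Schur--Zassenhaus to write $G=M\rtimes P$ with $|P|=4$ and the action factoring through $P/Z\cong\mathbbm{Z}_2$; your dichotomy $P\cong\mathbbm{Z}_4$ versus $P\cong\mathbbm{Z}_2\oplus\mathbbm{Z}_2$ corresponds exactly to the paper's non-split versus split cases, and the sign-set $S$ plays the role of the paper's $P,Q$ bookkeeping. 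The subsequent matching of bad branches to O.K.\ of Types 0, 1, 2 is identical in substance (your $\mathbbm{Z}_q\rtimes\mathbbm{Z}_4$ with $g^2=a^n$ is the paper's Type 2 witness in the non-split case), and your identification $D_{2n}\times\mathbbm{Z}_2\cong D_{4n}$ for odd $n$ is the standard fact the paper uses implicitly. What your route buys is the elimination of the cohomology computation and of the need to construct and verify a non-split representative: elementary Sylow/Hall theory pins down the structure of $G$ directly. What the paper's route buys is a template that scales to Proposition 5, where the $2$-part of the normal cyclic subgroup is larger, more automorphisms of order two appear, and the explicit extension representatives (including a quaternionic one) are genuinely needed.
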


\begin{proof}
  Note that in the above extension, there is an induced action $\varphi$ of
  $\mathbbm{Z}_2$ on $\mathbbm{Z}_{2 n}$. As in the previous proof, $\varphi
  (1)$ is a multiplication of $\pm 1$ on each factor of the prime
  decomposition $\mathbbm{Z}_{2 n}$. For convenience, we name those prime with
  $\varphi (1)$ being $+ 1$ as $p_i, 1 \leqslant i \leqslant k$, and those
  corresponding to $- 1$ as $q_j, 1 \leqslant j \leqslant l$ (the action on the
  $\mathbbm{Z}_2$ component is always trivial). Let $P = \underset{i}{\Pi}
  p_i^{n_i}, Q = \underset{j}{\Pi} p_j^{m_j}, n = 2 P Q$. An easy computation
  shows that $H^2 (\mathbbm{Z}_2, \mathbbm{Z}_{2 n})$, the cohomology of
  $\mathbbm{Z}_2$ with coefficient $\mathbbm{Z}_{2 n}$( being a
  $\mathbbm{Z}_2$-module via $\varphi$), is $\mathbbm{Z}_2$ for any $\varphi$.
  Thus up to equivalence there are two extensions for each fixed $\varphi$.
  
  A representative from each equivalence class can be given as:
  
  Split Case: $0 \longrightarrow \mathbbm{Z}_{2 n} \longrightarrow
  \mathbbm{Z}_{2 n} \rtimes_{\varphi} \mathbbm{Z}_2 \longrightarrow
  \mathbbm{Z}_2 \longrightarrow 0$, the semi-direct product.
  
  Non-split Case: $0 \longrightarrow \mathbbm{Z}_{2 n}
  \overset{\alpha}{\longrightarrow} \mathbbm{Z}_Q \rtimes_{\phi}
  \mathbbm{Z}_{4 P} \overset{\beta}{\longrightarrow} \mathbbm{Z}_2
  \longrightarrow 0$, here $\phi (1)$ is multiplication by $- 1$ on
  $\mathbbm{Z}_Q$, $\alpha$ is induced by $\mathbbm{Z}_{2 n}
  \longleftrightarrow \mathbbm{Z}_Q \times \mathbbm{Z}_{2 P} =\mathbbm{Z}_Q
  \rtimes_{\phi} \mathbbm{Z}_{2 P} \subset\mathbbm{Z}_Q \rtimes_{\phi}
  \mathbbm{Z}_{4 P}$, $\alpha (1) = (1, 2) \in \mathbbm{Z}_Q \rtimes_{\phi}
  \mathbbm{Z}_{4 P}$, $\beta$ is the projection on the quotient.
  
  Now we investigate each case.
  
  {\underline{Split Case}}:
  
  1) If $Q = 1$, then $\varphi$ is trivial and $G =\mathbbm{Z}_{2 n} \times
  \mathbbm{Z}_2$. For $n \geqslant 2$ this is O.K. of Type 1, which is impossible. If $n = 1$ then $G$ is dihedral.
  
  2) If $P = 1$, then the action of $\varphi$ on $\mathbbm{Z}_{2 n}$ is
  multiplication by $- 1$, so $G$ is the dihedral group $D_{4 n}$.
  
  3) If neither $P$ or $Q$ is 1, then taking primes $p, q$ from their
  respective family we obtain again $(\mathbbm{Z}_p \oplus \mathbbm{Z}_q)
  \rtimes_{\varphi} \mathbbm{Z}_2 \subset G$ which is O.K. of Type 0, and
  hence this case cannnot occur.
  
  {\underline{Non-split Case}}.
  
  1) If $Q = 1$, $G \cong \mathbbm{Z}_{4 P}$, cyclic.
  
  2) If $Q > 1$, then take one prime factor $q$ from its family. There is a
  canonical subgroup $\mathbbm{Z}_q \rtimes_{\phi} \mathbbm{Z}_4 \subset
  \mathbbm{Z}_Q \rtimes_{\phi} \mathbbm{Z}_{4 P} = G$. Since $P$ is odd, $\phi
  (1)$ acts on $\mathbbm{Z}_q$ by $- 1$. This is O.K. of Type 2, so cannot
  occur.
  
  In conclusion, $G$ is cyclic or dihedral.
\end{proof}

\begin{proposition}
  Suppose there is a short exact sequence $0 \rightarrow \mathbbm{Z}_{2^k n}
  \rightarrow G \rightarrow \mathbbm{Z}_2 \rightarrow 0$, n odd, $k \geqslant
  2$. Then G is cyclic or dihedral.
\end{proposition}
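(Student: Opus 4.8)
The plan is to reduce the whole question to the conjugation action of the quotient $\mathbbm{Z}_2$ on the normal subgroup $N = \mathbbm{Z}_{2^k n}$ and then to pin that action down using a Sylow $2$-subgroup together with Theorem 2. Write $N \cong \mathbbm{Z}_{2^k} \times \mathbbm{Z}_n$ ($n$ odd) and let $\varphi \colon \mathbbm{Z}_2 \to \tmop{Aut}(N)$ be the conjugation homomorphism, which is well defined because $N$ is abelian. Exactly as in the proofs of Propositions 3 and 4, $\varphi(1)$ has order dividing $2$, so on each odd prime-power factor it is multiplication by $+1$ or $-1$, since $\tmop{Aut}\,\mathbbm{Z}_{p^m}$ is cyclic with a unique involution. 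The feature special to $k \geqslant 2$, and the main obstacle, is the $2$-part: for $k \geqslant 3$ the group $\tmop{Aut}\,\mathbbm{Z}_{2^k}$ has three involutions (namely $-1$ and $2^{k-1} \pm 1$), and moreover the extension need not split, so a priori $G$ could contain an exotic semidirect product (O.K. Type 4) or a generalized quaternion piece (O.K. Type 3). Both phenomena must be excluded.

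The key move is to pass to a Sylow $2$-subgroup $S$ of $G$, of order $2^{k+1}$. Here $S \cap N$, being a Sylow $2$-subgroup of the cyclic group $N$, is cyclic of order $2^k$ and of index $2$ in $S$, and $SN = G$. Since $S$ acts orientation preservingly on $\mathbbm{R}^3$, Theorem 2 forces $S$ to be cyclic or dihedral; in particular $S$ is never generalized quaternion, which disposes of the non-split $2$-part in one stroke. Choosing any $s \in S \setminus N$, the restriction of $\varphi(1)$ to $\mathbbm{Z}_{2^k} = S \cap N$ equals conjugation by $s$, which is the identity if $S \cong \mathbbm{Z}_{2^{k+1}}$ and is inversion if $S \cong D_{2^{k+1}}$ (as $S \cap N$ is then the rotation subgroup). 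Hence $\varphi(1)$ acts on the $2$-part by exactly $\varepsilon \in \{+1, -1\}$, the exotic automorphisms $2^{k-1} \pm 1$ never arising, and $\varepsilon = +1$ precisely when $S$ is cyclic, $\varepsilon = -1$ precisely when $S$ is dihedral.

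It remains to control the odd primes in each case. Suppose $\varepsilon = -1$, so $S \cong D_{2^{k+1}}$. If some odd prime $p \mid n$ had $\varphi(1)$ acting by $+1$ on $\mathbbm{Z}_p$, then the central involution $z = r^{2^{k-1}} \in S \cap N$ and an involution $f \in S \setminus N$ would both centralize $\mathbbm{Z}_p$ and would generate a Klein four-group, yielding $\langle z, f \rangle \times \mathbbm{Z}_p \cong \mathbbm{Z}_{2 p} \times \mathbbm{Z}_2$, which is O.K. Type 1 --- a contradiction. Thus $\varphi(1) = -1$ on every factor, and the involution $f$ provides a complement to $N$, so $G \cong \mathbbm{Z}_{2^k n} \rtimes_{-1} \mathbbm{Z}_2 = D_{2^{k+1} n}$ is dihedral.

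Suppose instead $\varepsilon = +1$, so $S = \langle g \rangle \cong \mathbbm{Z}_{2^{k+1}}$ with $g \notin N$. If some odd prime $q \mid n$ had $\varphi(1)$ acting by $-1$ on $\mathbbm{Z}_q$, then, since $g^2 \in N$ centralizes $\mathbbm{Z}_q$, the subgroup $\langle \mathbbm{Z}_q, g \rangle \cong \mathbbm{Z}_q \rtimes_{\varphi} \mathbbm{Z}_{2^{k+1}}$ with $\varphi(1) = -1$ is O.K. Type 2 --- again a contradiction. Hence $\varphi$ is trivial, so $N \leqslant Z(G)$ and $G/Z(G)$ is cyclic, forcing $G$ abelian. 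The only abelian options are $\mathbbm{Z}_{2^{k+1} n}$ and $\mathbbm{Z}_{2^k n} \times \mathbbm{Z}_2$; the latter contains $\mathbbm{Z}_{2^k} \times \mathbbm{Z}_2$ (O.K. Type 1) and has non-cyclic Sylow $2$-subgroup, contradicting $\varepsilon = +1$, so $G \cong \mathbbm{Z}_{2^{k+1} n}$ is cyclic. Combining the two cases, $G$ is cyclic or dihedral, as claimed.
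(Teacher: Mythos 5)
Your proof is correct, but it takes a genuinely different route from the paper's. The paper fixes the conjugation action $\varphi$ (four possibilities on the $2$-part: $\pm 1$ and $2^{k-1}\pm 1$), computes $H^2(\mathbbm{Z}_2,\mathbbm{Z}_{2^k n})$ in each case to enumerate the equivalence classes of extensions, writes down an explicit split and (where $H^2\neq 0$) non-split representative for each, and then locates an obstruction kernel of Type 1, 2, 3 or 4 inside every forbidden one. You instead pass to a Sylow $2$-subgroup $S$ of order $2^{k+1}$, invoke Theorem 2 to force $S$ cyclic or dihedral, and read off the $2$-part of $\varphi(1)$ as conjugation by an element of $S\setminus N$ on the index-$2$ cyclic subgroup $S\cap N$; this kills the exotic automorphisms $2^{k-1}\pm 1$ and the non-split generalized-quaternion possibility in one stroke, and in the dihedral case hands you a splitting involution for free (the same device the paper itself uses later, in Proposition 9). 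After that, only O.K. Types 1 and 2 are needed to force the sign on the odd primes. What the paper's method buys is uniformity -- the same $H^2$ bookkeeping runs through Propositions 4 and 5 and generalizes to the dihedral extensions -- while yours is shorter and avoids constructing non-split extensions explicitly. Two small points worth making explicit if you write this up: that $S\cap N$ is the \emph{unique} cyclic subgroup of order $2^k$ in $D_{2^{k+1}}$ (so it really is the rotation subgroup, since $2^{k+1}\geqslant 8$), and that $\mathbbm{Z}_p$ and $\mathbbm{Z}_q$ are characteristic in the cyclic group $N$ and hence normal in $G$, which is what makes your subgroups $\langle z,f\rangle\times\mathbbm{Z}_p$ and $\mathbbm{Z}_q\rtimes\langle g\rangle$ legitimate.
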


\begin{proof}
  Again there is an induced action of $\mathbbm{Z}_2$ on $\mathbbm{Z}_{2^k
  n}$,
  \[ \varphi : \mathbbm{Z}_2 \longrightarrow \tmop{Aut} \mathbbm{Z}_{2^k n} =
     \underset{i}{\Pi} \tmop{Aut} \mathbbm{Z}_{p_i^{n_i}} \times \tmop{Aut}
     \mathbbm{Z}_{2^k} \]
  where $n = 2^k p_1^{n_1} \ldots p_l^{n_l}$ is the prime decomposition of
  $n$. As before we rename the $p_i$'s as $p_i$ and $q_j$ according to the
  sign of $\varphi (1)$ on the corresponding components, and define $P =
  \underset{i}{\Pi} p_i, Q = \underset{j}{\Pi} q_j$.
  
  Now for fixed $n, P, Q, k$, the possible $\varphi$'s are classified by their
  actions on $\mathbbm{Z}_{2^k}$. Since $\tmop{Aut} \mathbbm{Z}_{2^k} \cong
  \mathbbm{Z}_2 \oplus \mathbbm{Z}_{2^{k - 2}}$, there is no more than four
  elements of order $\leq$ 2 (when $k = 2$ there are two). It is not hard to see the
  possibilities for $\varphi (1)$ are the following multiplications on
  $\mathbbm{Z}_{2^k}$:
  
  1)$+ 1$, in which case $H^2 (\mathbbm{Z}_2, \mathbbm{Z}_{2^k n})
  =\mathbbm{Z}_2$.
  
  2)$- 1$, in which case $H^2 (\mathbbm{Z}_2, \mathbbm{Z}_{2^k n})
  =\mathbbm{Z}_2$.
  
  3)$2^{k - 1} + 1$, ($k > 2$), in which case $H^2 (\mathbbm{Z}_2,
  \mathbbm{Z}_{2^k n}) = 0$.
  
  4)$2^{k - 1} - 1$, ($k > 2$), in which case $H^2 (\mathbbm{Z}_2,
  \mathbbm{Z}_{2^k n}) = 0$.
  
  Therefore the equivalence classes of extensions in each case are:
  
  1)There are two equivalence classes:
  
  {\underline{Split Case}}: $0 \longrightarrow \mathbbm{Z}_{2^k n}
  \longrightarrow \mathbbm{Z}_{2^k n} \rtimes_{\varphi} \mathbbm{Z}_2
  \longrightarrow \mathbbm{Z}_2 \longrightarrow 0$, the semidirect product.
  Note that $G$ contains $\mathbbm{Z}_2^k \rtimes \mathbbm{Z}_2 \cong
  \mathbbm{Z}_{2^k} \times \mathbbm{Z}_2$ since $\varphi$ is trivial on
  $\mathbbm{Z}_{2^k}^{}$. But this is O.K. of Type 1; impossible.
  
  {\underline{Non-split Case}}: $0 \longrightarrow \mathbbm{Z}_{2^k n}
  \longrightarrow (\mathbbm{Z}_P \oplus \mathbbm{Z}_Q) \rtimes_{\phi}
  \mathbbm{Z}_{2^{k + 1}} \longrightarrow \mathbbm{Z}_2 \longrightarrow 0
  \nocomma, \phi (1)$ is multiplication by $+ 1, - 1$ on $\mathbbm{Z}_P$,
  $\mathbbm{Z}_Q$ respectively, and $\mathbbm{Z}_{2^k n} \cong \mathbbm{Z}_P
  \times \mathbbm{Z}_Q \times \mathbbm{Z}_{2^k_{}} \longhookrightarrow
  \mathbbm{Z}_P \times \mathbbm{Z}_Q \rtimes_{\phi} \mathbbm{Z}_{2^{k +
  1}_{}}$ canonically. This extension induces $\varphi$ and does not split.
  The group $G$ contains an O.K. of Type 2 $\mathbbm{Z}_q \rtimes
  \mathbbm{Z}_{2^{^{k + 1}}}$ unless $Q = 1$. So the only possible $G$ here is
  $\mathbbm{Z}_P \rtimes_{\phi} \mathbbm{Z}_{2^{k + 1}} \cong \mathbbm{Z}_P
  \times \mathbbm{Z}_{2^{k + 1}}^{}$, which is cyclic.
  
  2)Again there are two equivalence classes:
  
  {\underline{Split Case}}: $0 \longrightarrow \mathbbm{Z}_{2^k n}
  \longrightarrow \mathbbm{Z}_{2^k n} \rtimes_{\varphi} \mathbbm{Z}_2
  \longrightarrow \mathbbm{Z}_2 \longrightarrow 0$. The group $G =
  (\mathbbm{Z}_P \oplus \mathbbm{Z}_Q \oplus \mathbbm{Z}_{2^k})
  \rtimes_{\varphi} \mathbbm{Z}_2$ contains $(\mathbbm{Z}_P \oplus
  \mathbbm{Z}_2) \rtimes_{\varphi} \mathbbm{Z}_2$ ($\mathbbm{Z}_P \oplus
  \mathbbm{Z}_2$ is the fixed point set of the action by $\varphi$) unless $P
  = 1$. But $(\mathbbm{Z}_P \oplus \mathbbm{Z}_2) \rtimes_{\varphi}
  \mathbbm{Z}_2 \cong \mathbbm{Z}_{2 P} \rtimes \mathbbm{Z}_2 \cong
  \mathbbm{Z}_{2 P} \times \mathbbm{Z}_2$, i.e. O.K. of Type 1. So $P = 1$,
  and $G = (\mathbbm{Z}_Q \oplus \mathbbm{Z}_{2^k}) \rtimes_{\varphi}
  \mathbbm{Z}_2$, $\varphi (1)$ acts as multiplication by $- 1$, so $G$ is
  dihedral.
  
  {\underline{Non-split Case}}: $0 \longrightarrow \mathbbm{Z}_{2^k n}
  \overset{\alpha}{\longrightarrow} (\mathbbm{Z}_P \oplus \mathbbm{Z}_Q)
  \rtimes_{\phi} Q_{4 m} \overset{\beta}{\longrightarrow} \mathbbm{Z}_2
  \longrightarrow 0 \nocomma,$ where $m = 2^{k - 1} \geqslant 2$. Here
  $\phi$ is defined as follows. Let $x = e^{\frac{i \pi}{m}}, y = j$ in
  $Q_{4 m}$, define $\varphi (x) = \tmop{id}, \varphi (y)$ be multiplication
  by $+ 1, - 1$ on $\mathbbm{Z}_P, \mathbbm{Z}_Q$ respectively (Checking
  relations of $Q_{4 m}$ we see this is well-defined). The embedding $\alpha$
  \ comes from $\mathbbm{Z}_{2^k n} \cong \mathbbm{Z}_P \oplus \mathbbm{Z}_Q
  \oplus \mathbbm{Z}_{2^k} \longleftrightarrow \mathbbm{Z}_P \oplus
  \mathbbm{Z}_Q \rtimes_{\phi} \langle x \rangle \subset (\mathbbm{Z}_P \oplus
  \mathbbm{Z}_Q) \rtimes_{\phi} Q_{4 m}$, and $\beta$ is projection on the
  quotient. It can be readily verified that this extension is non-split and
  realizes $\varphi$. Now $G$ contains $Q_{4 m}$, which is O.K. of Type 3;
  impossible.
  
  3) Since $H^2 (\mathbbm{Z}_2, \mathbbm{Z}_{2^k n}) = 0$, there is up to
  equivalence only one extension, the semi-direct product $(\mathbbm{Z}_P
  \oplus \mathbbm{Z}_Q \oplus \mathbbm{Z}_{2^k}) \rtimes_{\varphi}
  \mathbbm{Z}_2$. This group contains O.K. of Type 4.
  
  4)Again there is only the semi-direct product. Now $G = (\mathbbm{Z}_P \oplus
  \mathbbm{Z}_Q \oplus \mathbbm{Z}_{2^k}) \rtimes_{\varphi} \mathbbm{Z}_2$
  contains $\mathbbm{Z}_{2^k} \rtimes_{\varphi} \mathbbm{Z}_2$. Checking
  the definition of $\varphi$, we see that this subgroup is precisely O.K. of
  Type 4; a contradiction.
  
  In conclusion: $G$ has to be cyclic or dihedral.
\end{proof}

Summing up Propositions 2-5, we obtain the desired result:

\begin{theorem}
  Suppose there is an extension $0 \longrightarrow \mathbbm{Z}_n
  \longrightarrow G \longrightarrow \mathbbm{Z}_p \longrightarrow 0$, $n
  \geqslant 1$, $p$ prime. Then $G$ is cyclic or dihedral.
\end{theorem}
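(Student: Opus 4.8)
The plan is to treat this theorem as an assembly of the four preceding propositions, organized by a case split on the prime $p$ and, when $p = 2$, on the $2$-adic valuation of $n$. First I would dispose of the odd case: if $p$ is an odd prime, then the hypothesis $0 \longrightarrow \mathbbm{Z}_n \longrightarrow G \longrightarrow \mathbbm{Z}_p \longrightarrow 0$ is exactly that of Proposition 2, which already yields that $G$ is cyclic, and hence in particular cyclic or dihedral. This leaves only the case $p = 2$.

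For $p = 2$, the idea is to write $n = 2^k m$ with $m$ odd and $k \geqslant 0$, and to observe that the three possible ranges of $k$ correspond precisely to the hypotheses of Propositions 3, 4, and 5. Concretely: when $k = 0$ the kernel $\mathbbm{Z}_n$ has odd order, so Proposition 3 applies; when $k = 1$ we have $n = 2m$ with $m$ odd, which is the setting of Proposition 4; and when $k \geqslant 2$ we have $n = 2^k m$ with $m$ odd and $k \geqslant 2$, which is exactly the hypothesis of Proposition 5. In each of these three subcases the cited proposition delivers that $G$ is cyclic or dihedral, completing the argument.

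There is no genuine obstacle remaining at this stage: all of the delicate work---the cohomological classification of the extension classes and the detection of the obstruction kernels of Types 0--4 inside the non-permissible configurations---has already been carried out inside Propositions 2--5. The only point requiring care is to check that the case split is exhaustive and mutually exclusive, i.e. that every $n \geqslant 1$ falls into exactly one of the ranges $k = 0$, $k = 1$, $k \geqslant 2$; this is immediate from the uniqueness of the factorization $n = 2^k m$ with $m$ odd. I would therefore expect the proof to consist of a single short paragraph that performs this split and cites the four propositions in turn.
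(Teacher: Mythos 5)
Your proposal is correct and matches the paper exactly: the paper's entire proof is the sentence ``Summing up Propositions 2--5, we obtain the desired result,'' which is precisely the case split on $p$ odd versus $p=2$ and, in the latter case, on the $2$-adic valuation of $n$ that you describe. Nothing further is needed.
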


\

\subsubsection{Extension of Dihedral Groups}

We will prove B.1 and B.2 of the outline. We start with an extension by odd
primes.

\begin{proposition}
  Suppose there is an extension $0 \longrightarrow D_4 \longrightarrow G
  \longrightarrow \mathbbm{Z}_p \longrightarrow 0$, $p$ odd prime. Then $p =
  3$ and $G \cong A_4$.
\end{proposition}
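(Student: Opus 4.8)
The plan is to exploit the coprimality of $|D_4| = 4$ with the odd prime $p$. First I would observe that $D_4 \cong \mathbbm{Z}_2 \oplus \mathbbm{Z}_2$ is the Klein four group: for $n = 2$ the dihedral relation $a b a = b$ reduces to commutativity. Since $\gcd (4, p) = 1$, the Schur--Zassenhaus theorem guarantees that the extension splits, so $G \cong (\mathbbm{Z}_2 \oplus \mathbbm{Z}_2) \rtimes_{\varphi} \mathbbm{Z}_p$ for some homomorphism $\varphi : \mathbbm{Z}_p \longrightarrow \tmop{Aut} (\mathbbm{Z}_2 \oplus \mathbbm{Z}_2)$, and the whole problem reduces to classifying these $\varphi$.

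The next step is to restrict the possibilities for $\varphi$. Since $\tmop{Aut} (\mathbbm{Z}_2 \oplus \mathbbm{Z}_2) \cong GL_2 (\mathbbm{F}_2) \cong S_3$ has order $6$, the image $\varphi (\mathbbm{Z}_p)$ is a cyclic subgroup whose order divides both $p$ and $6$. Because $p$ is an odd prime, this forces $\varphi$ to be trivial unless $p = 3$, in which case $\varphi (\mathbbm{Z}_3)$ may be a cyclic group of order $3$ acting by cyclically permuting the three involutions of $\mathbbm{Z}_2 \oplus \mathbbm{Z}_2$.

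I would then eliminate the trivial case. If $\varphi$ is trivial, then $G \cong \mathbbm{Z}_2 \oplus \mathbbm{Z}_2 \oplus \mathbbm{Z}_p$; combining one $\mathbbm{Z}_2$ factor with the coprime $\mathbbm{Z}_p$ yields $G \cong \mathbbm{Z}_{2 p} \times \mathbbm{Z}_2$ with $p \geqslant 3$, which is precisely an Obstruction Kernel of Type 1 and therefore cannot act on $\mathbbm{R}^3$. Hence $\varphi$ must be nontrivial, which by the preceding step forces $p = 3$ and pins down the action as an order-$3$ cyclic permutation of the three involutions.

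Finally I would identify the surviving group. The semidirect product $(\mathbbm{Z}_2 \oplus \mathbbm{Z}_2) \rtimes_{\varphi} \mathbbm{Z}_3$ with this action is exactly the standard description of the alternating group $A_4$ (a normal Klein four subgroup with quotient $\mathbbm{Z}_3$ cyclically permuting its three involutions); the two nontrivial choices of $\varphi$ differ by an automorphism of $\mathbbm{Z}_3$ and so yield isomorphic groups. Thus $G \cong A_4$. The only genuinely delicate point is verifying that the trivial-action case is actually forbidden rather than merely nonlinear, and this is dispatched cleanly by the Type 1 obstruction kernel; everything else is a routine identification of small groups.
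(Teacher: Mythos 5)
Your proof is correct and follows essentially the same route as the paper's: split the extension using coprimality, identify $\tmop{Aut} D_4 \cong S_3$, rule out the trivial action via the Type 1 obstruction kernel $\mathbbm{Z}_{2p} \times \mathbbm{Z}_2$, and recognize the nontrivial semidirect product for $p = 3$ as $A_4$. The only difference is cosmetic — you invoke Schur--Zassenhaus and note explicitly that the two nontrivial $\varphi$'s give isomorphic groups, details the paper leaves implicit.
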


\begin{proof}
  Since $(4, p) = 1$, the sequence splits and $G = D_4 \rtimes_{\varphi}
  \mathbbm{Z}_p$ for some $\varphi : \mathbbm{Z}_p \longrightarrow \tmop{Aut}
  D_4 \cong S_3$ (here $S_3$ is the permutation group of the nontrivial
  elements of $D_4$).
  
  If $p > 3$, then $\varphi$ is trivial and $G = D_4 \times \mathbbm{Z}_p
  \cong \mathbbm{Z}_{2 p} \times \mathbbm{Z}_2$, which is an O.K. of Type 1.
  
  Thus $p = 3$. We observe that $\varphi$ cannot be trivial, for in that case
  we would have again $G = D_4 \times \mathbbm{Z}_3 \cong \mathbbm{Z}_6 \times
  \mathbbm{Z}_2$; impossible. Thus $\varphi (1)$ is a 3-cycle in $\tmop{Aut}
  D_4 = S_3$. This $G = D_4 \rtimes_{\varphi} \mathbbm{Z}_p$ is precisely
  $A_4$.
\end{proof}

\begin{proposition}
  Suppose there is an extension $0 \longrightarrow D_{2 n} \longrightarrow G
  \longrightarrow \mathbbm{Z}_p \longrightarrow 0$, $n > 2$, $p$ odd prime.
  Then $G$ is dihedral.
\end{proposition}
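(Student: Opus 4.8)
The plan is to avoid dissecting the extension cocycle or the homomorphism $\varphi:\mathbbm{Z}_p\to\tmop{Aut}D_{2n}$ directly, and instead to manufacture inside $G$ a normal cyclic subgroup of index $2$, so that the statement reduces to the results already proved, namely Proposition 2 and Theorem 3. The hypothesis $n>2$ will be used twice, and this is precisely what separates the present case from the $n=2$ case (which instead produces $A_4$ in Proposition 7).

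First I would record that the rotation subgroup $\langle a\rangle\cong\mathbbm{Z}_n$ is characteristic in $D_{2n}$. Since $n>2$, every reflection has order $2<n$, so the elements of order $n$ all lie in $\langle a\rangle$; hence $\langle a\rangle$ is the unique cyclic subgroup of order $n$ and is preserved by every automorphism of $D_{2n}$. As $D_{2n}\vartriangleleft G$, it follows that $\langle a\rangle\vartriangleleft G$. Next I would pass to the quotient $Q=G/\langle a\rangle$, which has order $2p$. Because $p$ is an odd prime, the subgroup $\bar P\cong\mathbbm{Z}_p$ of order $p$ has index $2$ in $Q$ and is therefore normal. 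Let $M\vartriangleleft G$ be the full preimage of $\bar P$; then $[G:M]=2$ and $M$ sits in a short exact sequence $0\to\mathbbm{Z}_n\to M\to\mathbbm{Z}_p\to 0$ with $p$ odd. Since $M$ inherits the orientation-preserving action on $\mathbbm{R}^3$, Proposition 2 forces $M$ to be cyclic, so $M\cong\mathbbm{Z}_{np}$. Note this step needs neither the splitting nor the coprimality of $p$ and $2n$, so it covers the case $p\mid n$ uniformly.

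Finally, $M\cong\mathbbm{Z}_{np}$ is a normal cyclic subgroup of index $2$, i.e. there is an extension $0\to\mathbbm{Z}_{np}\to G\to\mathbbm{Z}_2\to 0$. Theorem 3 then says $G$ is cyclic or dihedral. But $G$ contains $D_{2n}$, which is non-abelian for $n>2$, so $G$ is non-abelian; the cyclic alternative is excluded and $G$ must be dihedral, as claimed.

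I expect the only genuinely clever point to be the construction of $M$: rather than trying to classify the possible actions of $\mathbbm{Z}_p$ on $D_{2n}$ and re-deriving obstructions case by case (which is the laborious route, and where the Type 0 and Type 2 obstruction kernels would otherwise have to be invoked), one peels off the index-$2$ cyclic piece and quotes Proposition 2 followed by Theorem 3. The two appeals to $n>2$, namely the characteristicness of $\langle a\rangle$ and the non-abelianness of $D_{2n}$, are exactly what make this reduction valid and cleanly distinguish the argument from the degenerate $n=2$ situation.
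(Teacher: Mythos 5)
Your proof is correct, and its endgame is the same as the paper's: produce a normal cyclic subgroup of index $2$ in $G$, recognize it as cyclic via Proposition 2, and finish with Theorem 3 plus the non-abelianness of $D_{2n}$. Where you differ is in how that subgroup is produced. The paper splits into two cases: for $p \nmid n$ it invokes the splitting $G \cong D_{2n} \rtimes_{\varphi} \mathbbm{Z}_p$ and takes the subgroup $\mathbbm{Z}_n \rtimes_{\varphi} \mathbbm{Z}_p$; for $p \mid n$ it proves $\mathbbm{Z}_n \vartriangleleft G$ by an explicit conjugation computation with a Sylow $p$-subgroup $P$ and then sets $Q = \langle \mathbbm{Z}_n \cup P \rangle$, checking $[G:Q]=2$ by hand. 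Your observation that $\langle a \rangle$ is characteristic in $D_{2n}$ once $n>2$ (every element of order $n>2$ is a rotation, so $\langle a\rangle$ is the unique cyclic subgroup of order $n$) gives $\langle a\rangle \vartriangleleft G$ at once, and pulling back the index-$2$ subgroup of the order-$2p$ quotient $G/\langle a\rangle$ yields the required $M$ with no case division on whether $p$ divides $n$; this is a genuine streamlining, and your remark that Proposition 2 needs neither splitting nor coprimality is exactly the point that makes it work. One correction to your framing only: the paper does not take the ``laborious route'' of classifying $\varphi$ and invoking the Type 0 and Type 2 obstruction kernels here (that happens in Propositions 3--5 and in the $\mathbbm{Z}_2$-extension cases); it too reduces to Proposition 2 and Theorem 3, just less uniformly. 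Your two uses of $n>2$ are both necessary and correctly placed.
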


\begin{proof}
  If $p \nmid n$, then the sequence splits and $G = D_{2 n} \rtimes_{\varphi}
  \mathbbm{Z}_p$. There is a subgroup $\mathbbm{Z}_n \rtimes_{\varphi}
  \mathbbm{Z}_p \subset G$ (because any automorphism of $D_{2 n}$ preserves
  the $\mathbbm{Z}_n$ in $D_{2 n}$). But $\mathbbm{Z}_n \rtimes_{\varphi}
  \mathbbm{Z}_p \cong \mathbbm{Z}_{n p}$ since it is an extension of
  $\mathbbm{Z}_n$ by $\mathbbm{Z}_p$ and we have proven this in Proposition 2.
  Consequently $G$ is an extension of this $\mathbbm{Z}_{n p}$ by
  $\mathbbm{Z}_2$ and thus has to be dihedral by Theorem 3.
  
  If $p|n$, let $n = p^k l, (p, l) = 1$. There are subgroups
  $\mathbbm{Z}_{p^k} \subset\mathbbm{Z}_n \subset D_{2 n} \subset G$. Let the
  standard generators of $D_{2 n}$ be $a, b$ as mentioned before. Let $P$ be a
  $p$-Sylow subgroup containing $\mathbbm{Z}_{p^k}$. Take any $x \in P - D_{2
  n}$. Suppose $x^{- 1} a x \nin \mathbbm{Z}_n$, then $x^{- 1} a x = a^r b$
  and $(a^r b)^l = x^{- 1} a^l x = a^l$, a contradiction. Thus $x^{- 1} a x
  \in \mathbbm{Z}_n$, which implies $N_G (\mathbbm{Z}_n) = G$, which implies
  $\mathbbm{Z}_n \vartriangleleft G$. Let $Q = \langle\mathbbm{Z}_n \cup P \rangle$. Now
  $P\nsubseteq D_{2 n}$, and $[G : \mathbbm{Z}_n] = 2 p$, and hence $[G ; Q] \leqslant
  2$. But $G = Q$ implies $D_{2 n} \subset P$; impossible. Thus $[G : Q] = 2$.
  Now $\mathbbm{Z}_n \vartriangleleft Q, [Q : \mathbbm{Z}_n] = p$, i.e. $Q$ is
  $\mathbbm{Z}_n$ extended by a $\mathbbm{Z}_p$. By Proposition 2 it has to be
  cyclic. As a result, $G$ is diheral by Theorem 3 (it is an extension of $Q$
  by $\mathbbm{Z}_2$).
\end{proof}

Next we turn to extension by $\mathbbm{Z}_2$. The discussion will be divided
into two cases depending whether $n$ is odd or even.

\begin{proposition}
  Suppose there is an extension $0 \longrightarrow D_{2 n} \longrightarrow G
  \longrightarrow \mathbbm{Z}_2 \longrightarrow 0$, $n \geqslant 2$ odd. Then
  $G$ is dihedral.
\end{proposition}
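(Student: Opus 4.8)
The plan is to extract from $D_{2n}$ a characteristic cyclic subgroup, push it up to a normal subgroup of $G$, and thereby reduce the problem to the action of a group of order $4$ on $\mathbbm{Z}_n$, where the obstruction kernels already established can do the work. First I would note that for odd $n$ the rotation subgroup $\mathbbm{Z}_n = \langle a \rangle$ coincides with the commutator subgroup $[D_{2n},D_{2n}]$ (since $[a,b]=a^{-2}$ generates $\langle a \rangle$ when $n$ is odd), hence is characteristic in $D_{2n}$; as $D_{2n}\vartriangleleft G$ this forces $\mathbbm{Z}_n\vartriangleleft G$. Then $G/\mathbbm{Z}_n$ has order $4$, so it is either $\mathbbm{Z}_4$ or $\mathbbm{Z}_2\oplus\mathbbm{Z}_2$, and since $\gcd(n,4)=1$, Schur--Zassenhaus splits the extension as $G\cong\mathbbm{Z}_n\rtimes_{\psi}(G/\mathbbm{Z}_n)$ with $\psi\colon G/\mathbbm{Z}_n\to\tmop{Aut}\mathbbm{Z}_n=\mathbbm{Z}_n^{\ast}$. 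Because the reflections of $D_{2n}$ act on $\langle a \rangle$ by inversion, the order-$2$ element of $G/\mathbbm{Z}_n$ carrying them must act as the global $-1$.

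Next I would rule out $G/\mathbbm{Z}_n\cong\mathbbm{Z}_4$. If $c$ generates this $\mathbbm{Z}_4$, then $c^2$ is the reflection class, so $\psi(c)^2=\psi(c^2)=-1$ in $\mathbbm{Z}_n^{\ast}$, and thus $\psi(c)$ has order $4$. For any prime $p\mid n$ the subgroup $\mathbbm{Z}_p$ is characteristic in $\mathbbm{Z}_n$, hence normal in $G$, and $\psi(c)$ restricts to an order-$4$ element of $\mathbbm{Z}_p^{\ast}$ squaring to $-1$. This forces $p\equiv 1\pmod 4$ and exhibits $\mathbbm{Z}_p\rtimes\mathbbm{Z}_4$ inside $G$ as an O.K. of Type 5 --- impossible. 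Hence $G/\mathbbm{Z}_n\cong\mathbbm{Z}_2\oplus\mathbbm{Z}_2$.

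Finally, writing $G/\mathbbm{Z}_n=\langle s,t \rangle$ with $\psi(s)=-1$ the reflection class, I would examine $\psi(t)$ against the prime decomposition of $n$. If $\psi(t)$ had mixed signs --- equal to $+1$ on some $\mathbbm{Z}_p$ and to $-1$ on some $\mathbbm{Z}_q$ --- then $(\mathbbm{Z}_p\oplus\mathbbm{Z}_q)\rtimes\langle t \rangle$ would be an O.K. of Type 0 (with $p,q$ distinct odd primes, precisely because $n$ is odd), a contradiction. So $\psi(t)=\pm1$ globally, and therefore exactly one of $t$ or $st$, call it $w$, acts trivially on $\mathbbm{Z}_n$; then $(a,w)$ has order $2n$ and $(1,s)$ conjugates it to its inverse, so $G=\langle (a,w),(1,s) \rangle\cong D_{4n}$ is dihedral. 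I expect the main obstacle to be the bookkeeping in this last step: correctly singling out which involution inverts $\langle a \rangle$ and which acts trivially, and verifying the dihedral relations for the chosen generators. The genuinely structural inputs beyond routine computation are the two reductions to obstruction kernels --- Type 5 to kill the $\mathbbm{Z}_4$ quotient and Type 0 to forbid mixed-sign actions --- together with the normality of $\mathbbm{Z}_p\subset\mathbbm{Z}_n\subset G$ that makes those prohibitions applicable.
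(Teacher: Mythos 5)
Your proof is correct, and although it funnels into the same final case analysis as the paper --- a quotient of order $4$ acting on $\mathbbm{Z}_n$, with the $\mathbbm{Z}_4$ case killed by an obstruction kernel of Type 5, the mixed-sign Klein four case killed by Type 0, and the uniform-sign case yielding a dihedral group --- it reaches that point by a genuinely different and more elementary route. The paper classifies the possible extensions by abstract kernels $\psi : \mathbbm{Z}_2 \rightarrow \tmop{Out} D_{2n} \cong \mathbbm{Z}_n^{\ast}/\{\pm 1\}$, uses the vanishing of $H^2$ and $H^3$ with coefficients in the trivial center to get uniqueness of the extension for each kernel, and then must construct an explicit model for each $\psi$, distinguishing the kernels whose lifts are involutions from those whose lifts square to $-1$ (this is the bookkeeping with $\mathbbm{Z}_{(p_i-1)p_i^{n_i-1}}$). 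Your observation that $\mathbbm{Z}_n = [D_{2n},D_{2n}]$ is characteristic in $D_{2n}$, hence normal in $G$ with quotient of order $4$, combined with Schur--Zassenhaus, produces the same two semidirect products $\mathbbm{Z}_n \rtimes \mathbbm{Z}_4$ and $\mathbbm{Z}_n \rtimes (\mathbbm{Z}_2 \oplus \mathbbm{Z}_2)$ with no extension-theoretic machinery at all, and the identification $\psi(c)^2 = \psi(c^2) = -1$ recovers exactly the paper's ``Case 2.'' A further small difference is at the end: in one uniform-sign subcase the paper concludes ``dihedral'' by feeding the index-two cyclic subgroup back into Theorem 3 (which itself rests on the $\mathbbm{R}^3$ action), whereas your generators $(a,w)$ and $(0,s)$ identify $G \cong D_{4n}$ by pure group theory. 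Both arguments invoke the topology only through the two obstruction kernels, so the logical dependencies are the same; yours is shorter and cleaner.
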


\begin{proof}
  Let $n = p_1^{n_1} \ldots p_k^{n_k}$ be its prime decomposition.
  
  Since $\tmop{Aut} D_{2 n} \cong \mathbbm{Z}_n \rtimes
  \mathbbm{Z}_n^{\ast}$, and the inner automorphism \ $\tmop{Inn} D_{2 n}
  \cong \mathbbm{Z}_n \rtimes \{ \pm 1 \}$, as a result $\tmop{Out} D_{2 n} =
  \tmop{Aut} D_{2 n} / \tmop{Inn} D_{2 n} \cong \mathbbm{Z}_n^{\ast} / \{ \pm
  1 \}$.
  
  Since center of $D_{2 n}$, denoted by $C$, is trivial, then $H^3
  (\mathbbm{Z}_2, C) = 0$, $H^2 (\mathbbm{Z}_2, C) = 0$. In other words, each
  abstract kernel $\psi : \mathbbm{Z}_2 \longrightarrow \tmop{Out} D_{2 n}$
  has a unique extension up to equivalence. We will construct explicitly an
  extension for each given kernel.
  
  Let $\psi$ be given, then
  \[ \mathbbm{Z}_n^{\ast} / \{ \pm 1 \} 
       =\mathbbm{Z}_{p_1^{n_1}}^{\ast}
     \times \ldots \times \mathbbm{Z}_{p_k^{n_k}}^{\ast} / \{ \pm (1, \ldots, 1) \}\]
      \[ \cong\
      \mathbbm{Z}_{(p_1 - 1) p_1^{n_1 - 1}} \times \ldots \times
     \mathbbm{Z}_{(p_k - 1) p_k^{n_k - 1}} / \langle \left( \frac{p_1 - 1}{2}
     p_1^{n_1 - 1}, \ldots, \frac{p_k - 1}{2} p_k^{n_k - 1} \right) \rangle .\]
  Denote by $[(b_1, \ldots, b_k)]$ the element corresponding to $\psi (1)$ in
  the middle group, and by $[(a_1, \ldots, a_k)]$ the element corresponding to
  $\psi (1)$ in the bottom group, where $b_i \in \mathbbm{Z}_{p_i^{n_i}},
  a_i \in \mathbbm{Z}_{(p_i - 1) p_i^{n_i - 1}}$.
  
  Since $\psi (1)$ has order 2, $(2 a_1, \ldots, 2 a_k) = 0$ or $(2 a_1,
  \ldots, 2 a_k) = \left( \frac{p_1 - 1}{2} p_1^{n_1 - 1}, \ldots, \frac{p_k -
  1}{2} p_k^{n_k - 1} \right)$.
  
  {\underline{Case 1}}: If $(2 a_1, \ldots, 2 a_k) = 0$, then $b_i = \pm 1$.
  Define
  \[ \varphi : \mathbbm{Z}_2 \longrightarrow \mathbbm{Z}_{p_1^{n_1}}^{\ast}
     \times \ldots \times \mathbbm{Z}_{p_k^{n_k}}^{\ast} =\mathbbm{Z}_n^{\ast}
     \subset\mathbbm{Z}_n \rtimes \mathbbm{Z}_n^{\ast} \cong \tmop{Aut} D_{2 n},
     \varphi (1) = (b_1, \ldots, b_k) \]
  This is a well-defined homomorphism, and $0 \longrightarrow D_{2 n}
  \longrightarrow D_{2 n} \rtimes_{\varphi} \mathbbm{Z}_2 \longrightarrow
  \mathbbm{Z}_2 \longrightarrow 0$ induces the abstract kernel $\psi$. Hence
  $G = D_{2 n} \rtimes_{\varphi} \mathbbm{Z}_2$.
  
  Recall that $a, b$ denote standard generators of $D_{2 n}$. The action
  $\varphi$ is trivial on $\langle b \rangle$, whence $\langle b \rangle \rtimes \mathbbm{Z}_2 \cong
  \mathbbm{Z}_2 \oplus \mathbbm{Z}_2$. Now $\mathbbm{Z}_n \vartriangleleft G$,
  and $G$ is easily seen to be the inner semi-direct product of this copy of
  $\mathbbm{Z}_2 \oplus \mathbbm{Z}_2$ with $\mathbbm{Z}_n$. To be precise
  (checking induced inner automorphisms), $G \cong \mathbbm{Z}_n
  \rtimes_{\phi} (\mathbbm{Z}_2 \oplus \mathbbm{Z}_2)$ where $\varphi (1, 0)$
  is multiplication by $(- 1)$, $\varphi (0, 1)$ is determined by $(b_1,
  \ldots, b_k) \in \mathbbm{Z}_{p_1^{n_1}}^{\ast} \times \ldots \times
  \mathbbm{Z}_{p_k^{n_k}}^{\ast} \cong \mathbbm{Z}_n$.
  
  Now $b_i = \pm 1$, and there are three subcases:
  
  i)If both $\pm 1$ exist among them, take the semi-direct product of
  $\mathbbm{Z}_n$ with the second $\mathbbm{Z}_2$ component, we see there is
  an O.K. of Type 0 within as before. This case is thus excluded.
  
  ii)If $b_i = + 1$ for all $i$, take again the semi-direct product with the
  second $\mathbbm{Z}_2$, $\mathbbm{Z}_{2 n} \cong \mathbbm{Z}_n \rtimes
  \mathbbm{Z}_2 \subset \mathbbm{Z}_n \rtimes_{\phi} (\mathbbm{Z}_2 \oplus
  \mathbbm{Z}_2)$ because the action restricts to trivial action on
  $\mathbbm{Z}_n$ and $n$ is odd. Since $G$ contains a cyclic subgroup of
  index 2, $G$ has to be dihedral by Theorem 3.
  
  iii)If $b_i = - 1$ for all $i$, then $(\mathbbm{Z}_n \rtimes \langle (1, 1) \rangle)
  \subset \mathbbm{Z}_n \rtimes_{\phi} (\mathbbm{Z}_2 \oplus \mathbbm{Z}_2)$.
  Now that $\mathbbm{Z}_n \rtimes \langle (1, 1) \rangle \cong \mathbbm{Z}_{2 n}$ since
  the action is trivial. And conjugation of either copy of $\mathbbm{Z}_2$ on $\mathbbm{Z}_2n$ is multiplication by $-1$. Thus $G$ is dihedral.
  
  This concludes Case 1.
  
  {\underline{Case 2}}: If $(2 a_1, \ldots, 2 a_k) = \left( \frac{p_1 - 1}{2}
  p_1^{n_1 - 1}, \ldots, \frac{p_k - 1}{2} p_k^{n_k - 1} \right)$, all $p_i
  \equiv 1 (\tmop{mod} 4)$ since $2 | \frac{p_i - 1}{2}$. $b_i = m_i$ where
  $m_i^2 = - 1$ in $\mathbbm{Z}_{p_i^{n_i}}^{\ast}$
  
  Let $G =\mathbbm{Z}_n \rtimes_{\varphi} \mathbbm{Z}_4$, where $\varphi :
  \mathbbm{Z}_4 \longrightarrow \mathbbm{Z}_n^{\ast} = \underset{i}{\Pi}
  \mathbbm{Z}_{p_i^{n_i}}^{\ast}$ sends $1$ to $\underset{i}{\Pi} m_i$
  
  Restricting $\varphi$ to $\mathbbm{Z}_2 \subset\mathbbm{Z}_4$, the resulted
  semi-direct product is precisely $D_{2 n}$. The extension
  \[ 0 \longrightarrow D_{2 n} =\mathbbm{Z}_n \rtimes_{\varphi} \mathbbm{Z}_2
     \longrightarrow \mathbbm{Z}_n \rtimes_{\varphi} \mathbbm{Z}_4
     \longrightarrow \mathbbm{Z}_2 \longrightarrow 0 \]
  induces the abstract kernel $\psi$, and up to equivalence it is the only
  one. This extension however can not exist, since $G$ contains
  $\mathbbm{Z}_{p_1} \rtimes_{\varphi} \mathbbm{Z}_4$ where $\varphi (1) = m_1
  \in \mathbbm{Z}_p^{\ast}, m_1^2 = - 1$, an O.K. of Type 5.
  
  This concludes Case 2.
  
  In conclusion, $G$ has to be dihedral.
  
\end{proof}

\begin{proposition}
  Suppose there is an extension $0 \longrightarrow D_{2 n} \longrightarrow G
  \longrightarrow \mathbbm{Z}_2 \longrightarrow 0$, n even. Then $G$ is
  dihedral.
\end{proposition}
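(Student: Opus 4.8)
The plan is to reduce everything to Theorems 2 and 3 together with the Obstruction Kernel of Type 1, by exploiting the characteristic rotation subgroup of $D_{2n}$. First I would dispose of the degenerate value $n=2$: then $D_4\cong\mathbbm{Z}_2\times\mathbbm{Z}_2$ and $|G|=8$, so $G$ is a $2$-group, and Theorem 2 forces $G$ to be cyclic or dihedral; since $G$ contains $\mathbbm{Z}_2\times\mathbbm{Z}_2$ it is not cyclic, hence $G\cong D_8$. So assume $n\geq 4$.

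Write $a,b$ for the standard generators of $D_{2n}$. For $n\geq 4$ the subgroup $\langle a\rangle\cong\mathbbm{Z}_n$ is the unique cyclic subgroup of index $2$ (equivalently, it is generated by the elements of order $n$), hence characteristic in $D_{2n}$ and therefore normal in $G$. I would then study the conjugation homomorphism $\rho\colon G\to\operatorname{Aut}\mathbbm{Z}_n=\mathbbm{Z}_n^{\ast}$ and its kernel $K=C_G(a)$. Because $b$ inverts $a$ we have $-1\in\rho(G)$, so $|\rho(G)|$ is even; and since $\langle a\rangle\subseteq K$, the index $[G:K]=|\rho(G)|$ divides $[G:\langle a\rangle]=4$. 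Thus $[G:K]\in\{2,4\}$, and the proof splits along this index.

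If $[G:K]=2$, then $K$ has order $2n$, contains $\langle a\rangle$ with index $2$, and is abelian, since it is generated by $\langle a\rangle$ together with a single element commuting with $a$. Hence $K\cong\mathbbm{Z}_{2n}$ or $K\cong\mathbbm{Z}_n\times\mathbbm{Z}_2$. The second possibility is O.K. of Type 1 (here $n$ is even with $n/2\geq 2$) and so cannot act, a contradiction; therefore $K\cong\mathbbm{Z}_{2n}$ is a cyclic subgroup of index $2$. Applying Theorem 3 to $0\to\mathbbm{Z}_{2n}\to G\to\mathbbm{Z}_2\to 0$ shows $G$ is cyclic or dihedral, and since $G\supset D_{2n}$ is nonabelian it must be dihedral.

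The main obstacle is to rule out $[G:K]=4$. Here $K=\langle a\rangle$ and $\rho(G)$ is an order-$4$ subgroup of $\mathbbm{Z}_n^{\ast}$ containing $-1$, so $\rho(G)$ is either the Klein four-group or cyclic. If it is Klein, it contains an involution $\sigma\neq\pm 1$; the preimage $G'=\rho^{-1}(\langle\sigma\rangle)$ fits in $0\to\mathbbm{Z}_n\to G'\to\mathbbm{Z}_2\to 0$ and so is cyclic or dihedral by Theorem 3, which forces the induced action $\sigma$ on $\langle a\rangle$ to be $+1$ or $-1$, contradicting $\sigma\neq\pm 1$. If $\rho(G)=\langle\sigma\rangle$ is cyclic, its unique involution is $-1$, so $\sigma^2\equiv-1\pmod n$; since $-1$ is not a square modulo $4$ this already forces $n\equiv 2\pmod 4$, hence $|G|_2=8$. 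Taking a lift $c$ of the generator of $G/D_{2n}$, one checks that $\rho(c)$ has order $4$, whence $\rho(c^2)=-1$, so $c^2$ is a reflection and $c$ has order $4$, while $a^{n/2}$ is a central involution of $G$ lying outside $\langle c\rangle$; hence a Sylow $2$-subgroup equals $\langle c\rangle\times\langle a^{n/2}\rangle\cong\mathbbm{Z}_4\times\mathbbm{Z}_2$, which is excluded by Theorem 2. Thus $[G:K]=4$ never occurs, and $G$ is dihedral. The points needing careful verification are the centrality of $a^{n/2}$ in $G$, the order of $c$ and the resulting Sylow $2$-subgroup in the cyclic case, and the uniform collapse of the Klein case through Theorem 3.
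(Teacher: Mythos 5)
Your proof is correct, and it takes a genuinely different route from the paper's. The paper first proves the extension splits: it picks a Sylow $2$-subgroup $P$ of $G$ containing the $D_{2^{l+1}} \subset D_{2n}$, invokes Theorem 2 to see $P$ is dihedral, and extracts an order-two element $c \in P - D_{2n}$ giving a section; it then writes $G \cong D_{2n} \rtimes_{\varphi} \mathbbm{Z}_2$ with $\varphi(1) = (t,s) \in \mathbbm{Z}_n \rtimes \mathbbm{Z}_n^{\ast} \cong \tmop{Aut} D_{2n}$, pins $s = \pm 1$ by applying Theorem 3 to the subgroup $\mathbbm{Z}_n \rtimes_{\varphi} \mathbbm{Z}_2$, handles $s = -1$ with $t$ odd by exhibiting a cyclic subgroup of order $2n$, and disposes of $s = -1$ with $t$ even via the Obstruction Kernel of Type 6, which exists in the paper essentially for this one step. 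You never address splitness at all: you work with the conjugation map $\rho \colon G \rightarrow \mathbbm{Z}_n^{\ast}$ on the characteristic rotation subgroup and its kernel $K = C_G(a)$, settle $[G:K] = 2$ via O.K.\ of Type 1 plus Theorem 3, and kill $[G:K] = 4$ either through Theorem 3 (Klein case) or by producing a $\mathbbm{Z}_4 \times \mathbbm{Z}_2$ Sylow $2$-subgroup (cyclic case), which Theorem 2 --- or again O.K.\ of Type 1 --- forbids. Your route makes O.K.\ of Type 6 and the explicit description of $\tmop{Aut} D_{2n}$ unnecessary for this proposition; the paper's route has the advantage of running in parallel with Propositions 7 and 8, reusing the same splitting trick and the same automorphism bookkeeping. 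The delicate points you flag do all check out: $a^{n/2}$ is central because every unit modulo the even integer $n$ is odd and hence fixes $a^{n/2}$ under the induced automorphism; $\rho(c^2) = -1$ forces $c^2$ to be a reflection, so $c$ has order exactly $4$ and $\langle c \rangle \cap \langle a \rangle = 1$, giving the order-$8$ subgroup $\langle c \rangle \times \langle a^{n/2} \rangle$; and in the Klein case the rotation subgroup is the unique cyclic index-two subgroup of a dihedral group of order $2n \geqslant 8$, so any element outside it inverts $a$, forcing $\sigma = -1$ and the desired contradiction.
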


\begin{proof}
  If $n = 2$, then $G$ is a 2-group and hence must be dihedral by Theorem 2.
  
  Let $n > 2$ and let $n = 2^l p_1^{n_1} \ldots p_k^{n_k}$ be the prime
  decomposition. It turns out that the sequence splits. To see this, take a
  Sylow-2 subgroup of $D_{2 n}$. It has to be of the form $D_{2^{l + 1}}$. Let
  $P$ be a Sylow-2 subgroup of $G$ containing $D_{2^{l + 1}}$, $[P : D_{2^{l +
  1}}] = 2$. By Theorem 2, $P$ has to be dihedral, in particular, there exist
  $c \in P - D_{2^{l + 1}}$ of order 2. Note that $c \nin D_{2 n}$, since
  otherwise $P \subset D_{2 n}$ which is impossible. Now the map
  $\mathbbm{Z}_2 \rightarrow G$ sending $1$ to $c$ is a splitting.
  
  As a consequence $G \cong D_{2 n} \rtimes_{\varphi} \mathbbm{Z}_2$ for some
  $\varphi : \mathbbm{Z}_2 \longrightarrow \tmop{Aut} D_{2 n} \cong
  \mathbbm{Z}_n \rtimes \mathbbm{Z}_n^{\ast}$. Let $\varphi (1) = (t, s)$.
  
  There is a canonical subgroup $\mathbbm{Z}_n \rtimes_{\varphi} \mathbbm{Z}_2
  \subset D_{2 n} \rtimes_{\varphi} \mathbbm{Z}_2$. Since $\mathbbm{Z}_n
  \rtimes_{\varphi} \mathbbm{Z}_2$ is an extension of $\mathbbm{Z}_n$ by
  $\mathbbm{Z}_2$ then it is cyclic or dihedral by previous results
  (Theorem 3). We discuss these two cases separately:
  
  i)Suppose $\mathbbm{Z}_n \rtimes_{\varphi} \mathbbm{Z}_2$ is cyclic. Then
  $G$ is an extension of a cyclic group by $\mathbbm{Z}_2$, so $G$ is
  dihedral.
  
  ii)Suppose $\mathbbm{Z}_n \rtimes_{\varphi} \mathbbm{Z}_2$ is dihedral. Then
  $\varphi (1) \in \tmop{Aut} \mathbbm{Z}_n$ is multiplication by $- 1$, and
  whence $s = - 1$ (see remark previous to O.K. of Type 6). Now we divide the
  discussion by the parity of $t$:
  
  1)If $t$ is odd, take $i = \frac{t + 1}{2}$. Then $2 i - t = 1
  $ which implies $ (a^i b, 1)^2 = (a^{2 i - t}, 0) = (a, 0)$; thus $(a^i b, 1)$
  is of order $2 n$. Thus $G$ contains a cyclic subgroup of order $2 n$. It is not hard to see that $G$ is the semidirect product of this cyclic group with $\langle (b,0) \rangle$. An easy computation shows that $(b,0)(a^i b,1)(b,0)=(a^i b,1)^{-1}$, whence $G$ is dihedral.
  
  2)If $t$ is even, then $G$ is O.K. of Type 6; impossible.
  
  In conclusion, $G$ is dihedral.
\end{proof}

Summing up the results in this subsection, we have:

\begin{theorem}
  Suppose there is an extension $0 \longrightarrow D_{2 n} \longrightarrow G
  \longrightarrow \mathbbm{Z}_p \longrightarrow 0, n \geqslant 2, p$ prime.
  Then $G$ is dihedral or $A_4$.
\end{theorem}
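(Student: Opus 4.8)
The plan is to deduce this theorem purely by assembling the four preceding propositions, splitting into cases according to the parity of $p$ together with the size of $n$. Since $D_{2n}$ is defined only for $n \geqslant 2$ while $p$ ranges over all primes, the four combinations (odd $p$ versus $p = 2$, and $n = 2$ versus $n > 2$) exhaust every instance of the hypothesis, so once the propositions are in hand the argument is essentially bookkeeping.

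First I would treat the case $p$ odd. Here the decisive distinction is whether $D_{2n}$ equals the Klein four-group $D_4$ or not. If $n = 2$, then Proposition 6 applies directly and forces $p = 3$ with $G \cong A_4$. If $n > 2$, then Proposition 7 applies and yields that $G$ is dihedral. In either subcase the desired conclusion holds.

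Next I would treat the case $p = 2$, splitting on the parity of $n$. If $n$ is odd (hence $n \geqslant 3$), Proposition 8 shows that $G$ is dihedral; if $n$ is even (including the boundary value $n = 2$, which is already subsumed by Proposition 9, whose proof disposes of $n = 2$ via Theorem 2), Proposition 9 again shows that $G$ is dihedral. Thus for $p = 2$ the group $G$ is always dihedral.

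Combining the cases, $G$ is dihedral in every situation except the single corner $p = 3$, $n = 2$, where it is $A_4$; hence $G$ is dihedral or $A_4$, as claimed. I do not expect any genuine obstacle: the content resides entirely in the four propositions, and the only point demanding care is to confirm that the case split is exhaustive and that the exceptional outcome $A_4$ can arise only from $D_4$, whose enlarged automorphism group $\tmop{Aut} D_4 \cong S_3$ is precisely what permits a non-dihedral extension.
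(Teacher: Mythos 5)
Your proposal is correct and is exactly the argument the paper intends: Theorem 4 is stated as a summary of Propositions 6--9, and your four-way case split (odd $p$ with $n=2$ or $n>2$; $p=2$ with $n$ odd or even) is the same exhaustive bookkeeping. Nothing further is needed.
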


\subsubsection{Extension of $A_4$}

There is an isomorphism $\tmop{Aut} A_4 \cong S_4$, induced by the conjugation
of elements of $S_4$ on the normal subgroup $A_4$. Let $D_4$ denote the unique
order 4 subgroup of $A_4$. $D_4 \vartriangleleft A_4$, and any automorphism of
$A_4$ preserves $D_4$. Now we are ready the state and prove:

\begin{theorem}
  Suppose there is an extension $0 \longrightarrow A_4 \longrightarrow G
  \longrightarrow \mathbbm{Z}_p \longrightarrow 0$, p prime. Then $p = 2$ and
  $G \cong S_4$
\end{theorem}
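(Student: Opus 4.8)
The plan is to exploit the triviality of the center of $A_4$, exactly as in the dihedral computation of Proposition 8, so that the extension is rigidly pinned down by its abstract kernel. First I would record the two facts about $A_4$ that drive everything. The center $Z(A_4)$ is trivial, so $\tmop{Inn} A_4 \cong A_4$; combined with $\tmop{Aut} A_4 \cong S_4$ this gives
\[ \tmop{Out} A_4 = \tmop{Aut} A_4 / \tmop{Inn} A_4 \cong S_4 / A_4 \cong \mathbbm{Z}_2 . \]
The given extension therefore has an abstract kernel $\psi : \mathbbm{Z}_p \longrightarrow \tmop{Out} A_4 \cong \mathbbm{Z}_2$. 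Since $Z(A_4) = 0$, both the obstruction group $H^3 (\mathbbm{Z}_p, Z(A_4))$ and the classifying group $H^2 (\mathbbm{Z}_p, Z(A_4))$ vanish, so for each abstract kernel $\psi$ there is exactly one extension up to equivalence. Thus $G$ is completely determined once $\psi$ is known.

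Next I would rule out odd $p$. As $\gcd (p, 2) = 1$, the only homomorphism $\psi : \mathbbm{Z}_p \to \mathbbm{Z}_2$ is trivial, and by the uniqueness above $G$ must be the (unique) extension realizing the trivial kernel, namely the direct product $G \cong A_4 \times \mathbbm{Z}_p$. Let $D_4 \cong \mathbbm{Z}_2 \oplus \mathbbm{Z}_2$ be the unique order $4$ subgroup of $A_4$ introduced above. Then $G$ contains $D_4 \times \mathbbm{Z}_p \cong \mathbbm{Z}_2 \oplus \mathbbm{Z}_2 \oplus \mathbbm{Z}_p \cong \mathbbm{Z}_{2 p} \times \mathbbm{Z}_2$, which, since $p \geqslant 3$, is an O.K. of Type 1. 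This contradiction forces $p = 2$.

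It remains to treat $p = 2$, where $\psi : \mathbbm{Z}_2 \to \tmop{Out} A_4 \cong \mathbbm{Z}_2$ is either trivial or an isomorphism. If $\psi$ is trivial, uniqueness again gives $G \cong A_4 \times \mathbbm{Z}_2$, which contains $D_4 \times \mathbbm{Z}_2 \cong \mathbbm{Z}_2 \oplus \mathbbm{Z}_2 \oplus \mathbbm{Z}_2$; this is a $2$-group that is neither cyclic nor dihedral, contradicting Theorem 2. Hence $\psi$ must be the isomorphism $\mathbbm{Z}_2 \overset{\sim}{\longrightarrow} \tmop{Out} A_4$. But $S_4$ already realizes this nontrivial kernel: $A_4 \vartriangleleft S_4$ with $S_4 / A_4 \cong \mathbbm{Z}_2$, and conjugation by a transposition is an outer automorphism of $A_4$ (its centralizer in $S_4$ is trivial, so it is not inner). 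By the uniqueness of the extension realizing a given abstract kernel, $G \cong S_4$, as desired.

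The calculations I am suppressing are the standard identifications $\tmop{Out} A_4 \cong \mathbbm{Z}_2$ and the fact that a transposition induces the nontrivial outer automorphism. The genuinely load-bearing input is the center-triviality rigidity, and the point that most needs care is that the split direct product is the \emph{unique} representative of the trivial abstract kernel: this is what guarantees that no exotic non-split extension of $A_4$ by $\mathbbm{Z}_p$ can evade the obstruction-kernel arguments, and it is the exact analogue of the uniqueness step used for $D_{2 n}$ in Proposition 8.
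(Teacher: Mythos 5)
Your proof is correct, and it takes a genuinely different route from the paper's. The paper argues by a three-way case split: for $p>3$ it invokes Schur--Zassenhaus to split the sequence and then kills the action by an order count; for $p=3$ it uses Corollary 2 (Sylow-$3$ subgroups of an acting group are cyclic) to rewrite $G$ as $D_4\rtimes\mathbbm{Z}_9$ and extracts $D_4\times\mathbbm{Z}_3$; for $p=2$ it uses Theorem 2 (Sylow-$2$ subgroups are dihedral) to manufacture an order-$2$ element outside $A_4$, hence a splitting, and then analyzes whether $\varphi(1)$ lies in $A_4$. You instead apply, uniformly in $p$, the Eilenberg--MacLane classification of extensions with centerless kernel: since $Z(A_4)=0$, each abstract kernel $\mathbbm{Z}_p\to\tmop{Out}A_4\cong\mathbbm{Z}_2$ is realized by exactly one extension, so $G$ is $A_4\times\mathbbm{Z}_p$ for $p$ odd and is $A_4\times\mathbbm{Z}_2$ or $S_4$ for $p=2$; the direct products are then excluded by O.K.\ of Type 1 and by Theorem 2 exactly as in the paper. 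This is the same rigidity device the paper itself deploys for $D_{2n}$ ($n$ odd), for $S_4$, and for the $A_5$ step of the main theorem, so your argument is entirely within the paper's toolbox; what it buys is uniformity and brevity (no Sylow-subgroup input is needed to pin down the isomorphism type of $G$, only to exclude the direct products at the end), at the cost of invoking the nonabelian-kernel obstruction theory where the paper gets by with elementary splitting arguments. One small slip in a suppressed step: your parenthetical reason that conjugation by a transposition is outer (``its centralizer in $S_4$ is trivial'') is not right as stated --- the centralizer of a transposition in $S_4$ has order $4$; the correct statement is that $C_{S_4}(A_4)$ is trivial, so conjugation by $\sigma\in S_4$ restricts to an inner automorphism of $A_4$ only if $\sigma\in A_4$. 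Since $\tmop{Aut}A_4\cong S_4$ with $\tmop{Inn}A_4$ corresponding to $A_4$, the conclusion you need is immediate anyway.
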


\begin{proof}
  The conjugation of any element of $G$ restricts to an automorphism of $A_4$.
  By the discussion above, such an automorphism has to preserve $D_4$, so $D_4
  \vartriangleleft G$.
  
  i)If $p > 3$, $(12, p) = 1$ then the sequence splits, so $G = A_4
  \rtimes_{\varphi} \mathbbm{Z}_p$ for some $\varphi$. In fact $\varphi$ has
  to be trivial since $| \tmop{Aut} A_4 | = | S_4 | = 24$ and $p > 3$. Now $G$
  contains $D_4 \rtimes_{\varphi} \mathbbm{Z}_p = D_4 \times \mathbbm{Z}_p
  \cong \mathbbm{Z}_{2 p} \oplus \mathbbm{Z}_2$. This is O.K. of Type 1;
  impossible.
  
  ii)If $p = 3$, let $P$ be the Sylow-3 subgroup. By Corollary 2, $P \cong
  \mathbbm{Z}_{3^n}$ for some $n$. Since $| D_4 | = 4$, $| P | = 9$, $D_4
  \vartriangleleft G$, we have $G \cong \mathbbm{D}_4 \rtimes_{\varphi}
  \mathbbm{Z}_9$ for some $\varphi$. $\varphi (1)^9 = \varphi (9) = \tmop{id}
  \in \tmop{Aut} D_4 \cong S_3$. $| S_3 | = 6$ implies $\varphi (1)^3 =
  \tmop{id}$. Consequently there is a subgroup $D_4 \times \mathbbm{Z}_3
  \subset \mathbbm{D}_4 \rtimes_{\varphi} \mathbbm{Z}_9$. Now $D_4 \times
  \mathbbm{Z}_3 \cong \mathbbm{Z}_6 \times \mathbbm{Z}_2$, an O.K. of Type 1.
  
  iii)If $p = 2$, define $P$ as the Sylow-2 subgroup of $G$ containing $D_4$.
  The group $P$ is diheral by Theorem 2, thus there is $c \in P - D_4$ with
  $c^2 = \tmop{id}$. Obviously $c \nin A_4$, so we obtain a splitting.
  Therefore $G \cong A_4 \rtimes_{\varphi} \mathbbm{Z}_2$ for some $\varphi$,
  where $\varphi (1) \in \tmop{Aut} A_4 \cong S_4$ is of order 2. We consider
  two cases:
  
  {\underline{Case 1}}: If $\varphi (1) \in A_4 \subset S_4$, then $\varphi
  (1) \in D_4$ implies $\varphi (1)_{|D_4} = \tmop{id}$ ($\varphi (1)$ is
  conjugation and $D_4$ is abelian). This however implies that the subgroup
  $D_4 \rtimes_{\varphi} \mathbbm{Z}_2 \cong \mathbbm{Z}_2 \oplus
  \mathbbm{Z}_2 \oplus \mathbbm{Z}_2$. This is impossible by Theorem 2, so
  this case is excluded.
  
  {\underline{Case 2}}: If $\varphi (1) \nin A_4$. Consider the canonical $0
  \longrightarrow A_4 \longrightarrow S_4 \longrightarrow \mathbbm{Z}_2
  \longrightarrow 0$. $\varphi : \mathbbm{Z}_2 \rightarrow S_4$ is a
  splitting. Thus $S_4 \cong A_4 \rtimes_{\varphi} \mathbbm{Z}_2 \cong G$.
  
  This proves the theorem.
\end{proof}

\subsubsection{Extension of $S_4$}

\begin{theorem}
  There is no extension of $S_4$
\end{theorem}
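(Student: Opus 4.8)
The plan is to exploit the fact that $S_4$ is a \emph{complete} group and then to detect a known obstruction kernel inside $G$. First I would record that $S_4$ has trivial center and no outer automorphisms, so $\tmop{Aut} S_4 \cong \tmop{Inn} S_4 \cong S_4$; in other words $S_4$ is complete. The key classical consequence is that whenever a complete group $N$ sits as a normal subgroup of a group $G$, one has $G \cong N \times C_G (N)$: the conjugation homomorphism $G \longrightarrow \tmop{Aut}(N) = \tmop{Inn}(N)$ shows $G = N \cdot C_G(N)$, while $N \cap C_G(N) = Z(N) = \{ 0 \}$ and both factors are normal. Applying this to the extension $0 \longrightarrow S_4 \longrightarrow G \longrightarrow \mathbbm{Z}_p \longrightarrow 0$ gives $C_G(S_4) \cong \mathbbm{Z}_p$ and hence $G \cong S_4 \times \mathbbm{Z}_p$. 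This reduces the whole theorem to showing that $S_4 \times \mathbbm{Z}_p$ cannot act orientation preservingly on $\mathbbm{R}^3$ for any prime $p$.

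Next I would locate the Klein four subgroup $D_4 \vartriangleleft S_4$ (the unique normal subgroup of order $4$, which is $\mathbbm{Z}_2 \oplus \mathbbm{Z}_2$) and work inside the subgroup $D_4 \times \mathbbm{Z}_p \subset G$. For $p$ odd, $\gcd(2, p) = 1$ yields $D_4 \times \mathbbm{Z}_p \cong \mathbbm{Z}_2 \oplus \mathbbm{Z}_2 \oplus \mathbbm{Z}_p \cong \mathbbm{Z}_{2 p} \times \mathbbm{Z}_2$ with $m = p \geqslant 2$, which is precisely an O.K. of Type 1; this is impossible. For $p = 2$ the same subgroup becomes $D_4 \times \mathbbm{Z}_2 \cong \mathbbm{Z}_2 \oplus \mathbbm{Z}_2 \oplus \mathbbm{Z}_2$, a $2$-group that is neither cyclic nor dihedral, contradicting Theorem 2. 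Since $p$ is either odd or equal to $2$, both cases are excluded and no such $G$ exists.

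The argument is short once the completeness reduction is in place, so essentially all the content is in the first step. I expect the main (though still routine) obstacle to be justifying $G \cong S_4 \times \mathbbm{Z}_p$ cleanly — that is, recalling that $S_4$ is complete and that a complete normal subgroup automatically splits off as a direct factor. After that, the detection of the forbidden subgroups $\mathbbm{Z}_{2 p} \times \mathbbm{Z}_2$ (O.K. of Type 1) and $\mathbbm{Z}_2 \oplus \mathbbm{Z}_2 \oplus \mathbbm{Z}_2$ (via Theorem 2) is immediate from results already established, and one does not need to analyze the action on $\mathbbm{R}^3$ or $S^3$ directly at all.
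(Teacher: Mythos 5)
Your proof is correct, and its second half (detecting $D_4 \times \mathbbm{Z}_p \cong \mathbbm{Z}_{2p} \oplus \mathbbm{Z}_2$ as an O.K.\ of Type 1 for $p$ odd, and $D_4 \times \mathbbm{Z}_2 \cong \mathbbm{Z}_2 \oplus \mathbbm{Z}_2 \oplus \mathbbm{Z}_2$ as forbidden by Theorem 2 for $p = 2$) is exactly the paper's argument. The only divergence is in how you establish the reduction $G \cong S_4 \times \mathbbm{Z}_p$: the paper invokes the Eilenberg--MacLane classification of extensions with non-abelian kernel --- a unique abstract kernel $\mathbbm{Z}_p \rightarrow \tmop{Out} S_4 = \{0\}$ together with $H^2(\mathbbm{Z}_p, Z(S_4)) = 0$ forces the product extension --- whereas you use the elementary lemma that a complete normal subgroup is a direct factor, via the conjugation map $G \rightarrow \tmop{Aut}(S_4) = \tmop{Inn}(S_4)$ with kernel $C_G(S_4)$. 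Both routes rest on the same two inputs ($Z(S_4)$ trivial and $\tmop{Out}(S_4)$ trivial), but yours avoids the cohomological machinery entirely and is arguably the cleaner packaging for this particular kernel; the paper's formulation has the advantage of being the same template it reuses elsewhere (e.g.\ for the $A_5$ extensions in the non-solvable case, where $\tmop{Out}$ is nontrivial and the completeness shortcut is unavailable). No gaps.
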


\begin{proof}
  Suppose there is an extension $0 \longrightarrow S_4 \longrightarrow G
  \longrightarrow \mathbbm{Z}_p \longrightarrow 0$, $p$ prime. Let $C$ be the
  center of $S_4$, $C = \{ 0 \}$. Now $\tmop{Aut} S_4 = \tmop{Inn} S_4 \cong
  S_4$, the second isomorphism being the obvious one. Thus $\tmop{Out} S_4$ is
  trivial and there exist only one abstract kernel: the trivial homomorphism
  $\mathbbm{Z}_p \rightarrow \tmop{Out} S_4$. The product extension $0
  \longrightarrow S_4 \longrightarrow S_4 \times \mathbbm{Z}_p \longrightarrow
  \mathbbm{Z}_p \longrightarrow 0$, and the fact $H^2 (\mathbbm{Z}_p, C) = 0$
  imply that this extension is the only one up to equivalence. Thus $G = S_4
  \times \mathbbm{Z}_p$.
  
  If $p = 2$, then $G$ contains $D_4 \times \mathbbm{Z}_2 \cong \mathbbm{Z}_2
  \oplus \mathbbm{Z}_2 \oplus \mathbbm{Z}_2$; impossible by Theorem 2.
  
  If $p > 2$, then $G$ contains $D_4 \times \mathbbm{Z}_p \cong \mathbbm{Z}_{2
  p} \oplus \mathbbm{Z}_2$, which is O.K. of Type 1; impossible.
\end{proof}

\subsubsection{Conclusion}

Summing up the previous subsections and using an inductive argument, we
obtain:

\begin{corollary}
  If $G$ is solvable, then $G \subset SO (3)$.
\end{corollary}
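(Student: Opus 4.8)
The plan is to realize $G$ as the endpoint of a chain of prime-order extensions and to propagate the isomorphism type along the chain using Theorems 3--6. Since $G$ is solvable it admits a subnormal series
\[ 1 = G_0 \vartriangleleft G_1 \vartriangleleft \cdots \vartriangleleft G_m = G \]
in which each quotient $G_{i + 1} / G_i$ is cyclic of prime order $p_i$. Every $G_i$ is a subgroup of $G$ and hence acts orientation preservingly on $\mathbbm{R}^3$ by restriction, so each step $0 \longrightarrow G_i \longrightarrow G_{i + 1} \longrightarrow \mathbbm{Z}_{p_i} \longrightarrow 0$ is precisely an extension of the sort analyzed in this subsection.

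I would then argue by induction on $i$ that each $G_i$ is isomorphic to a cyclic group, a dihedral group $D_{2 n}$ ($n \geqslant 2$), $A_4$, or $S_4$. The base case is immediate, $G_0 = 1$ being cyclic. For the inductive step one splits into cases according to the type of $G_i$ and invokes the matching theorem. If $G_i$ is cyclic, Theorem 3 gives that $G_{i + 1}$ is cyclic or dihedral. If $G_i$ is dihedral, Theorem 4 gives that $G_{i + 1}$ is dihedral or $A_4$. If $G_i \cong A_4$, Theorem 5 forces $p_i = 2$ and $G_{i + 1} \cong S_4$. Finally, if $G_i \cong S_4$, Theorem 6 asserts that no nontrivial extension exists; since the inclusions in the series are strict, this case can only occur at the top, so $G = S_4$ already. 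In every case $G_{i + 1}$ again lands among the four admissible types, which closes the induction and shows that $G = G_m$ is cyclic, dihedral, $A_4$, or $S_4$.

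The conclusion then follows because each of these four families is a finite subgroup of $SO(3)$: the cyclic and dihedral groups as the rotational symmetries of a regular polygon, $A_4$ as the rotation group of the tetrahedron, and $S_4$ as the rotation group of the cube (equivalently the octahedron). Hence $G \subset SO(3)$, as claimed.

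I expect the only genuine care to lie in the bookkeeping of the induction rather than in any fresh geometric input, since the four extension theorems already carry the substantive work. The points worth watching are the overlap between the cyclic and small dihedral types --- for instance $D_2 \cong \mathbbm{Z}_2$ should be recorded as cyclic, while $D_4 \cong \mathbbm{Z}_2 \oplus \mathbbm{Z}_2$ must be treated as dihedral with $n = 2$ so that Theorem 4 applies at the next step --- together with the terminal nature of $S_4$ supplied by Theorem 6, which is exactly what prevents the chain from ever leaving the list of finite subgroups of $SO(3)$.
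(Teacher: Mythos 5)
Your proposal is correct and is exactly the inductive argument the paper intends: the paper's own ``proof'' consists of the single sentence ``Summing up the previous subsections and using an inductive argument,'' and the induction you spell out (composition series with prime-order factors, propagating the types cyclic/dihedral/$A_4$/$S_4$ via Theorems 3--6, each $G_i$ acting by restriction) matches the strategy laid out in the paper's Section 2.1 outline. Your attention to the edge cases $D_2 \cong \mathbbm{Z}_2$ and $D_4 \cong \mathbbm{Z}_2 \oplus \mathbbm{Z}_2$ is a sensible precaution that the paper leaves implicit.
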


\

\subsection{Arbitrary Finite Group}

Now we can prove Theorem 1:

\begin{proof}
  Our first observation is that by previous discussions (Theorem 2 and
  Corollary 2), the Sylow-$p$ subgroups of $G$ are cyclic for odd $p$, and
  cyclic or dihedral for $p = 2$. If the Sylow-2 subgroups are cyclic, then
  $G$ is solvable (cf. [17] p.143), and Corollary 3 takes care of this
  situation. Thus it suffices to assume that Sylow-2 subgroups of $G$ are
  dihedral.
  
  By the result of Suzuki (cf. [28]), there exist a subgroup $G_1$ of $G$
  having the following properties: $G_1 \vartriangleleft G$, $[G : G_1]
  \leqslant 2$, $G_1 \cong Z \times L$ where $Z$ is a solvable group and $L =
  PSL (2, p)$ the projective linear group for prime $p$.
  
  i)If $p = 2$ or 3, then $L \cong S_3$ or $L \cong A_4$, in either case $G_1$
  is solvable, thus so is $G$. Corollary 3 gives the desired result.
  
  ii)If $p = 5$, then $L = A_5$. $Z$ has to be trivial. This is because
  otherwise it contains a copy of $\mathbbm{Z}_q$, $q$ prime. This
  $\mathbbm{Z}_q$ together with $D_4 \subset A_5$ produce $\mathbbm{Z}_q
  \times D_4 \subset Z \times L = G_1$. But $\mathbbm{Z}_q \times D_4 \cong
  \mathbbm{Z}_{2 q} \oplus \mathbbm{Z}_2$, an O.K. of Type 1. So $Z = 1$, $G_1
  \cong A_5$.
  
  Suppose $[G : G_1] = 2$. We have an extension $0 \longrightarrow A_5
  \longrightarrow G \longrightarrow \mathbbm{Z}_2 \longrightarrow 0$. Now
  $\tmop{Aut} A_5 = S_5,$ and $\tmop{Inn} A_5 = A_5$. Hence there are only two
  possible abstract kernels $\mathbbm{Z}_2 \rightarrow \tmop{Out} A_5 \cong
  \mathbbm{Z}_2$. Since $A_5$ is centerless ($C = \{ 0 \}$), then $H^2
  (\mathbbm{Z}_2, C) = 0$ in either case. In other words, there are only two
  extensions up to isomorphism. Now $G \cong S_5$ and $G \cong A_5 \times
  \mathbbm{Z}_2$ (together with obvious short exact sequences) are
  nonequivalent extensions giving all possibilities.
  
  {\underline{Case 1}}: $G \cong S_5$. In this case $G$ contains a general
  affine group $GA (1, 5)$ (e.g. the subgroup $\langle (12345), (2354) \rangle$ of $S_5$).
  Note that $GA (1, 5) \cong \mathbbm{Z}_5 \rtimes
  \mathbbm{Z}_5^{\ast}$., where the semidirect product is the canonical one. This subgroup
  is solvable. By Corollary 3, $\mathbbm{Z}_5 \rtimes \mathbbm{Z}_5^{\ast}$ is
  isomorphic to either a cyclic/dihedral group, $A_4$ or $S_4$. This is
  however impossible (for the dihedral case it is convenient to use the fact
  $\mathbbm{Z}_5^{\ast} \cong \mathbbm{Z}_4$).
  
  {\underline{Case 2}}: $G \cong A_5 \times \mathbbm{Z}_2$. Then $G$ contains
  $D_4 \times \mathbbm{Z}_2$. As we have seen before, this is impossible.
  
  Thus the assumption $[G : G_1] = 2$ lead to contradiction, $G = G_1 \cong
  A_5 \subset SO (3)$.
  
  This finishes the case $p = 5$.
  
  iii)If $p > 5$, then $L$ contains (as the normalizer of a Sylow-$p$
  subgroup) a group $\mathbbm{Z}_p \rtimes_{\varphi} (\mathbbm{Z}_p^{\ast} /
  \{ \pm 1 \})$ where $\varphi ([a])$ is the multiplication by $a^2$ for $a
  \in \mathbbm{Z}_p^{\ast}$. Let $b$ be a generator of the cyclic group
  $\mathbbm{Z}_p^{\ast}$. Then $\mathbbm{Z}_p \rtimes_{\varphi}
  (\mathbbm{Z}_p^{\ast} / \{ \pm 1 \}) \cong \mathbbm{Z}_p \rtimes_{\phi}
  \mathbbm{Z}_{\frac{p - 1}{2}}$ where $\phi (1)$ is multiplication by $b^2$.
  This group is solvable and contains $\mathbbm{Z}_p, p > 5$, so it is cyclic
  or dihedral (Corollary 3). The element $(0, 1) \in \mathbbm{Z}_p
  \rtimes_{\phi} \mathbbm{Z}_{\frac{p - 1}{2}}$ has order $\frac{p - 1}{2} >
  2$, thus the conjugation of $(0, 1)$ in $\mathbbm{Z}_p$ has to be trivial in
  either case (in the dihedral case $\mathbbm{Z}_p$ is in the unique cyclic
  subgroup of index 2, so does $(0, 1)$). Thus $b^2 = 1 \in \mathbbm{Z}_p^{}$
  which implies $| \mathbbm{Z}_p^{\ast} | = 1$ or 2, consequently $p = 2$ or
  3; a contradiction.
  
  In conclusion $G \subset SO (3)$.
\end{proof}

\

\section{Remarks}

This short section contains the following two results (cf. [16], [21]):

{\tmstrong{Remark 3.1}}: {\tmem{Let $G$ be a finite group acting (orientation
preservingly), locally linearly or smoothly on $\mathbbm{R}^4$. Then $G$ is
isomorphic to a subgroup of $O_4$ ($SO (4)$).}}

{\tmstrong{Remark 3.2}}: {\tmem{There are finite groups $G$ which act topologically
and orientation preservingly on $\mathbbm{R}^4$ and $G\nsubseteq SO (4)$ (in fact
$G\nsubseteq O (7)$).}}

These two remarks are included for the completeness reasons. Our proof of
Remark 3.1 is quite different from the one given in [16]. It is much shorter
and is very much in the spirit of considerations in our proof of Theorem 1.

{\tmstrong{Proof of 3.1}}: Our first observation is the following:

{\tmstrong{FACT}}: The only finite simple group which acts orientation
preservingly on $\mathbbm{R}^4$ is $A_5$.

The above fact follows from the direct inspection of all finite non-abelian
simple groups, in the atlas of \ Finite Groups (cf. [7]). The point here is
that each such group except $A_5$ has a solvable subgroup too large to act
effectively on $\mathbbm{R}^4$. (We recall that each solvable group acting on
$\mathbbm{R}^4$ always has a fixed point). For example, in $A_6$ one can take
the normalizers of Sylow-3 subgroups.

Suppose now that $G$ is NOT simple. Let $H \neq \{ 0 \}$ be a maximal normal
proper subgroup of $G$ (i.e. $G / H$ is simple).

{\underline{Case 1}}: $H$ is a non-abelian simple group (hence $H \cong A_5$).
Then we have an extension
\begin{equation}
  0 \longrightarrow H \longrightarrow G \longrightarrow G / H \longrightarrow
  0
\end{equation}
In order to classify such extensions (cf. [5] p.105), let $\psi : G / H
\rightarrow \tmop{Out} (A_5) \cong \mathbbm{Z}_2$ be a homomorphism. Then
$\psi : G / H \rightarrow \mathbbm{Z}_2$ is trivial except when $G / H \cong
\mathbbm{Z}_2$. (note that $G / H$ is a simple group).

The set of extensions (1) with fixed $\psi$ is classified by $H^2 (G / H ; Z
(H))$, where $Z (H)$ is the center of $H$ (cf. [5] p.105). Consequently there
is only one extension $G \cong H \times G / H$ for $G / H \neq \mathbbm{Z}_2$
and two extensions $G \cong H \times \mathbbm{Z}_2$, and $G \cong S_5$ for $G
/ H \cong \mathbbm{Z}_2$.

It follows that both $G \cong A_5 \times \mathbbm{Z}_2$ and $S_5$ are subgroups of $SO(4)$ and it is not difficult to see that $G \cong A_5 \times G/H$, $G/H \neq \mathbbm{Z}_2$ cannot act on $\mathbbm{R}^4$ 

{\underline{Case 2}}: $H$ is simple abelian. In this case $(\mathbbm{R}^4)^G =
((\mathbbm{R}^4)^H)^{G / H} = \{ \tmop{pt} \}$. Consequently $G \subset
SO (4)$

{\underline{Case 3}}: $H$ is not simple. Repeating the argument from Case 1
and Case 2 with $H$ replacing $G$ one easily concludes $G \subset SO
(4)$.

{\underline{Case 4}}: Suppose $G$ has an orientation reversing element. Let $K \triangleleft G$ be the normal subgroup of orientation preserving elements, so that $G/K \cong \mathbbm{Z}_2$. Then either $(\mathbbm{R}^4)^K \neq \emptyset$ and hence $(\mathbbm{R}^4)^G \neq \emptyset$ and consequently $G \subseteq O(4)$ or $K \cong A_5$ and hence we have an extension
\begin{equation}
  0 \longrightarrow K \longrightarrow G \longrightarrow \mathbbm{Z}_2 \longrightarrow
  0
\end{equation}
This however was handled in Case 1 and hence the proof of Remark 3.1 is concluded.
\

\

{\tmstrong{Proof of 3.2}}: Let $Q (8 p, q)$ be the generalized quaternionic
group (cf. [8]), given by the extension
\[ 0 \longrightarrow \mathbbm{Z}_p \times \mathbbm{Z}_q \longrightarrow Q (8
   p, q) \longrightarrow Q (8) \longrightarrow 0 \]
where $Q (8)$ is the standard quaternionic group of order eight.

It turns out that $\pi = Q (8 p, q)$ i a 4-periodic group and hence acts
freely on a simply connected CW complex $\widetilde{X}$ with $\widetilde{X} \simeq
S^3$. (cf. [8]).

There are conditions on $p, q$ (cf. [8]) (for example $(p, q) = (3, 313), (3,
433), (7, 113), (5, 461), \ldots$) which imply that $\pi$ acts freely on a
closed 3-manifold $\mathcal{M}^3$ which is a homology 3-sphere. Moreover there
is a $\mathbbm{Z} [\pi]$-homology equivalence
\[ k : \mathcal{M}^3 \longrightarrow X = \widetilde{X} / \pi \]
Consider the map $h = k \times \tmop{id} : \mathcal{M}^3 \times I
\longrightarrow X \times I$ and let $\lambda (h) \in L_0^h (\pi)$ be the
surgery obstruction for changing $h$ to a homotopy equivalence without
modifying anything on the boundaries.

Now let $\mathcal{F}: \mathbbm{Z} [\pi] \rightarrow \mathbbm{Z} [\pi]$ be the
identity homomorphism and $\Gamma_0 (\mathcal{F})$ be the Cappell-Shaneson
homological surgery obstruction group as in [6].

The natural homomorphism $j_{\ast} : L_0^h (\pi) \rightarrow \Gamma_0
(\mathcal{F})$ is an isomorphism (see [6] p.288) and clearly $j_{\ast}
(\lambda (h)) = 0$ so that $\lambda (h) = 0$ in $L_0^h (\pi)$.

Let $\bar{h} : (\mathcal{W}^4 ; \mathcal{M}^3, \mathcal{M}^3) \longrightarrow
(X \times I ; X, X)$ be a homotopy equivalence. Form a two ended open manifold
\[ \mathcal{W}^4_0 \assign \ldots \cup \mathcal{W}^4
   \underset{\mathcal{M}^3}{\cup} \mathcal{W}^4 \underset{\mathcal{M}^3}{\cup}
   \mathcal{W}^4 \cup \ldots \]
by stacking together copies of $\mathcal{W}^4$.

Observe that $\pi_1 (\mathcal{W}^4) \cong \pi$ and the universal cover
$\widetilde{\mathcal{W}^4_0}$ of $\mathcal{W}^4_0$ is a manifold properly
homotopy equivalent to $S^3 \times \mathbbm{R}$ and hence homeomorphic to $S^3
\times \mathbbm{R}$ by [13].

One point compactification of one end of $\widetilde{\mathcal{W}^4_0}$ yields
an action of $\pi$ on $\mathbbm{R}^4$ with one fixed point. Since $\pi$ is not
isomorphic to a subgroup of $O (7)$ (cf. [1]) the proof of 3.2 is complete.
\
\end{large}
\
\

\
\\
\\
\textbf{{\LARGE Acknowledgments.}}: We would like to thank Prof. Reinhard Schultz for turning our attention to the problem studied in this paper and suggesting a possible line of attack.
\\
\\
\
\
\
\\\title{\textbf{{\LARGE Reference}}}
\
\begin{flushleft}
[1] S. Bentzen and J. Madsen, On the Swan subgroups of certain periodic groups,
Math. Ann. 162 (1983), 447-474.

[2] R. H. Bing, A homeomorphism between the 3-sphere and sum of two solid
horned spheres, Ann. of Math. 56 (1952), 354-362.

[3] R. H. Bing, Inequivalent families of periodic homeomorphisms of $E^3$, Ann.
of Math. 80 (1964), 78-93.

[4] L. E. J. Brouwer, {\"U}ber die periodischen~Transformationen der Kugel,
Math. Ann. 90 (1919), 39-41.

[5] K. Brown, Cohomology of Groups, Grad. Texts in Math. 87, Springer-Verlag,
New York, Berlin, Heidelberg 1982.

[6] S. Cappell and J. Shaneson, The codimension two placement problem and
homology equivalent manifolds, Ann. of Math. 99 (1974) 277-348.

[7] J. H. Conway, R. T. Curtis, S. P. Norton, R. A. Parker and R. A. Wilson,
Atlas of Finite Groups, Oxford Univ. Press, 1985.

[8] J. Davis and R. Milgram, A survey of the Spherical Space Form Problem,
Math. Reports, Vol. 2, Harvard Academic Publishers. London, 1985, pp. 223-283.

[9] R. Dotzel and G. Hamrick, $p$-group actions on homology spheres, Invent.
Math. 62 (1981), 437-442

[10] A. Edmonds, A topological proof of the equivariant Dehn Lemma, Trans.
Amer. Math. Soc. 297 (1986), 605-615.

[11] S. Eilenberg, Sur les transformations p{\'e}riodiques de la surface de
sph{\`e}re, Fund. Math 22 (1934), 28-41.

[12] W. Feit and J. Thompson, Solvability of groups of odd order, Pacific J.
Math. 13 (1963), 775-102.

[13] M. Freedman, The topology of four-dimensional manifolds, J. Diff. Geom. 17
(1982), 357-453.

[14] C. H. Giffen, The generalized Smith conjecture, Amer. J. Math. 88 (1966),
187-198.

[15] C. McA. Gordon, On the higher dimensional Smith conjecture, Proc. London
Math. Soc.(3), 29(1974), 98-110.

[16] A. Guazzi, M. Mecchia and B. Zimmermann, On finite groups acting on
acyclic low dimensional manifolds, Fund. Math. 215 (2011), 203-217.

[17] M. Hall, The Theory of Groups, The Macmillan Company, New York, 1962.

[18] W. Jaco and H. Rubinstein, PL Equivariant Surgery and Invariant
Decompositions of 3-manifolds, Adv. Math. 73 (1989), 149-191.

[19] B. Kerekjarto, {\"U}ber~die-periodischen~Transformationen der Kreisscheibe
und der Kugelfl{\"a}che, Math. Ann. 80 (1919), 36--38.

[20] S. Kwasik and R. Schultz, Icosahedral group actions on $\mathbbm{R}^3$,
Invent. Math. 108 (1992), 385-402.

[21] S. Kwasik and R. Schultz, Desuspension of group actions and the ribbon
theorem, Topology Vol. 27 (1988), 443-457.

[22] S. Kwasik and R. Schultz, Pseudofree group actions on $S^4$, Amer. J. Math
112 (1990), 47-70.

[23] W. Meeks and S. T. Yau, The equivariant Dehn's Lemma and Loop theorem,
Comment. Math. Helv. 56 (1981), 225-239.

[24] J. Nielsen, Abbildungsklassen endlicher Ordnung, Acta Math. 75 (1942),
24-115.

[25] P. A. Smith, Transformations of finite period, Ann. of Math. (2) 39
(1938), 127-164.

[26] E. Stein, Surgery on products with finite fundamental group, Topology 16 (1977), 473-493

[27] F. Sun, Topological Symmetries of $\mathbbm{R}^3$, II, Preprint, Tulane University, 2016.

[28] M. Suzuki, On finite groups with cyclic Sylow subgroups for all odd
primes, Amer. J. Math. 77 (1955), 657-691.

\end{flushleft}

\
\\
\

Slawomir Kwasik

Department of Math.

Tulane University

New Orleans, LA70118

kwasik@tulane.edu

\

Fang Sun

Department of Math.

Tulane University

New Orleans, LA70118

fsun@tulane.edu

\end{document}